\theoremstyle{plain} 
\newtheorem{thm}{Theorem}[section]
\newtheorem{lem}[thm]{Lemma}
\newtheorem*{quesb}{Question}
\newtheorem{prop}[thm]{Proposition}
\newtheorem{cor}[thm]{Corollary}
\theoremstyle{definition}
\newtheorem{defn}[thm]{Definition}
\theoremstyle{remark}
\newtheoremstyle{TheoremNum}
        {\topsep}{\topsep}              
        {\itshape}                      
        {}                              
        {\bfseries}                     
        {.}                             
        { }                             
        {\thmname{#1}\thmnote{ \bfseries #3}}
    \theoremstyle{TheoremNum}
    \newtheorem{thmn}{Theorem}
\newtheorem{corn}{Corollary}
\DeclareMathOperator{\tor}{Tor}
\DeclareMathOperator{\rank}{rank}
\DeclareMathOperator{\torlen}{TorLen}
\DeclareMathOperator{\tgen}{fg}
\DeclareMathOperator{\ord}{o}
\title[Torsion length]{A note on torsion length}
\author{Maurice Chiodo, Rishi Vyas}
\begin{document}

\let\thefootnote\relax\footnotetext{2010 \textit{AMS Classification:} 20F05, 20E06, 20F14.}
\let\thefootnote\relax\footnotetext{\textit{Keywords:} Torsion, torsion length, word-hyperbolic, embeddings.}
\let\thefootnote\relax\footnotetext{The first author was partially supported by the Italian FIRB ``Futuro in Ricerca'' project RBFR10DGUA\_002 at the University of Milan.}
\let\thefootnote\relax\footnotetext{The second author did a part of this work while writing his thesis at the University of Cambridge. He thanks Wolfson College, Cambridge and the Cambridge Commonwealth Trust for their support.}

\begin{abstract}
We construct a $2$-generator recursively presented group with infinite torsion length. We also explore the construction in the context of solvable and word-hyperbolic groups.
\end{abstract}

\maketitle

What should the `torsion' subgroup of an arbitrary group be? The set of torsion elements does not work: it is not necessarily a subgroup. Attempting to consider the subgroup generated by the set of torsion elements as the `torsion' subgroup of a group is also unsatisfactory; the quotient of a group by this subgroup need not be torsion-free, as shown in proposition \ref{first eg}. We can, however, iterate this procedure: letting $\tor_{1}(G)$ be the subgroup generated by the torsion elements of a group $G$, we inductively define $\tor_{n+1}(G)/\tor_{n}(G)=\tor_{1}(G/\tor_{n}(G))$, and form the union $\tor_{\omega}(G)=\bigcup_{n\in \mathbb{N}}\tor_{n}(G)$. The subgroup $\tor_{\omega}(G)$ is a viable candidate for the `torsion' subgroup of a group.  The structure of  $\tor_{\omega}(G)$ as a countable union of subgroups allows us to attach an invariant to any group $G$, which we call the \emph{torsion length} of $G$ (definition \ref{tordefn}) and denote by $\torlen(G)$: this is the minimum ordinal $n$ such that $\tor_{n}(G)=\tor_{\omega}(G)$. All of this is described in greater detail in \textsection \ref{torsec}.

In \textsection \ref{embedsec}, we study embeddings. There is a well-known, uniform process for embedding a countable group into a $2$-generator group. We describe this process (lemma \ref{2 gen tor}) and verify that it does not change torsion length (theorem \ref{3 gen torlen}).

In \textsection \ref{groupswitharblen}, we begin by constructing finitely presented groups with arbitrary finite torsion length. More precisely, we prove the following result (writing $\overline{P}$ to denote the group presented by a presentation $P$):

\vspace{3mm}

\begin{thmn}[\ref{fp tor len}]
There is a family of finite presentations $\{P_{n}\}_{n \in \mathbb{N}}$ such that:
\\$1$. $\overline{P}_{n+1} / \tor_{1}(\overline{P}_{n+1}) \cong \overline{P}_{n}$,
\\$2$. $\torlen(\overline{P}_{n})=n$.
\end{thmn}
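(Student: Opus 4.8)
The plan is to build the presentations $P_n$ recursively, iterating a uniform version of the construction behind Proposition~\ref{first eg}. I would take $P_0$ to present the trivial group and $P_1 = \langle c \mid c^2\rangle$, and then, given a finite presentation $P_{n-1} = \langle Y \mid r_1,\dots,r_m\rangle$ with each $r_j$ a nonempty reduced word, set
\[
  P_n \;=\; \langle\, Y,\, x_1,\dots,x_m \;\mid\; x_j^2,\ (x_j r_j)^2 \quad(1\le j\le m)\,\rangle .
\]
The idea behind the relations is that $x_j^2 = (x_j r_j)^2 = 1$ forces $x_j r_j x_j = r_j^{-1}$, so $\langle x_j, r_j\rangle$ is dihedral with $r_j$ in its ``translation'' subgroup, while $r_j = (r_j x_j)x_j$ is a product of two involutions; hence killing the torsion of $\overline{P}_n$ ought to reinstate exactly the relations $r_j = 1$ and return $\overline{P}_{n-1}$. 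Each $P_n$ is plainly finite (the step from $P_{n-1}$ to $P_n$ adjoins $m$ generators and $2m$ relators), and one checks the new relators are again nonempty reduced words, so the recursion is self-sustaining.

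The heart of the argument is clause~1, and I would prove it by recognising $\overline{P}_n$ as the fundamental group of a graph of groups whose underlying graph is a star: the central vertex carries the \emph{free} group $F = F(Y)$, each leaf $j$ carries a copy $D_j \cong C_2 * C_2$ of the infinite dihedral group (namely $\langle r_j, x_j \mid x_j^2, (x_j r_j)^2\rangle$), and the edge joining the centre to leaf $j$ carries the infinite cyclic group $\langle r_j\rangle$, which embeds in $F$ (as $r_j \neq 1$ in the free group) and in $D_j$ as the index-two translation subgroup. Expanding this graph of groups reproduces the presentation $P_n$, and Bass--Serre theory then supplies what is needed: the vertex groups embed in $\overline{P}_n$ (so the generators in $Y$, and the words $r_j$, have infinite order there), and every finite-order element of $\overline{P}_n$ is conjugate into a vertex group, hence --- $F$ being torsion-free --- conjugate to a reflection $r_j^k x_j$ of some $D_j$. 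Consequently $\llangle\tor(\overline{P}_n)\rrangle = \llangle x_1,\dots,x_m, r_1,\dots,r_m\rrangle$ (the inclusion ``$\supseteq$'' because each $x_j$ and each $r_j = (r_j x_j)x_j$ is a product of torsion elements, ``$\subseteq$'' from the conjugacy description), and quotienting by this normal subgroup kills the $x_j$ and re-imposes $r_j = 1$, leaving $\langle Y \mid r_1,\dots,r_m\rangle = \overline{P}_{n-1}$.

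Clause~2 then follows by induction on $n$, using clause~1 and the elementary behaviour of the chain $\tor_\bullet$ recorded in \textsection\ref{torsec}: writing $N = \tor_1(\overline{P}_n) = \llangle\tor(\overline{P}_n)\rrangle$, one has $\overline{P}_n/N \cong \overline{P}_{n-1}$ and hence $\tor_{j}(\overline{P}_n)/N = \tor_{j-1}(\overline{P}_{n-1})$ for all $j\ge1$, so the ascending chain $\tor_\bullet(\overline{P}_n)$ stabilises exactly one stage after $\tor_\bullet(\overline{P}_{n-1})$ does --- at stage $n$, by the inductive hypothesis $\torlen(\overline{P}_{n-1}) = n-1$. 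The two endpoint checks are that $\tor_0(\overline{P}_n) = 1 \neq N$ (the $x_j$ genuinely have order $2$, e.g.\ by the retraction $\overline{P}_n \to C_2$ sending one $x_j$ to the generator and all other generators to $1$) and $N \neq \overline{P}_n$ (its quotient $\overline{P}_{n-1}$ is nontrivial); the base cases $n=0,1$ are immediate.

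The step I expect to be the main obstacle is the structural claim in clause~1 --- that $\overline{P}_n$ is precisely the stated graph of groups, with all edge groups genuinely injecting into both the free central vertex and the dihedral leaves --- since this is exactly what licences the classification of torsion elements and hence the identification of $\llangle\tor(\overline{P}_n)\rrangle$; the surrounding bookkeeping (that the relators stay nontrivial and nothing collapses at any stage) is routine but must be tracked carefully through the recursion.
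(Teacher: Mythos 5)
Your argument is correct, but it takes a genuinely different route from the paper's proof of theorem \ref{fp tor len}. The paper sets $\overline{P}_{n+1}\cong(\overline{P}_{n}*\overline{P}_{n})*_{\langle rs\rangle=\langle t^{3}\rangle}\mathbb{Z}$ and identifies $\tor_{1}$ by two applications of proposition \ref{amalgams}, checking that the amalgamating cyclic subgroup survives the torsion quotient; this keeps every relator of length $3$ or $5$ and makes the groups word-hyperbolic via Kharlampovich--Myasnikov. You instead start from an arbitrary finite presentation of $\overline{P}_{n-1}$ and glue an infinite dihedral group $\langle x_{j},r_{j}\rangle\cong C_{2}*C_{2}$ onto the free group $F(Y)$ along each cyclic subgroup $\langle r_{j}\rangle$, turning each relator into a product of two involutions. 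This is much closer in spirit to the Leary--Nucinkis construction recorded as theorem \ref{LN construction}, but is more elementary: cyclic edge groups, no Stallings foldings, and the identification of $\llangle\tor(\overline{P}_{n})\rrangle$ follows from iterating theorem \ref{tor thm} over the star together with the structure of $D_{\infty}$. Your key checks are all in place: the edge groups inject (the $r_{j}$ are nonempty reduced words, and $r_{j}$ has infinite order in the dihedral leaf), the normal closure of the torsion is exactly $\llangle \{x_{j}\}\cup\{r_{j}\}\rrangle$ since $r_{j}=(r_{j}x_{j})x_{j}$, and the induction for clause 2 via lemma \ref{sum lengths} needs only that $\overline{P}_{n}$ is not torsion-free, which your retraction onto $C_{2}$ supplies. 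The trade-off is that your relator lengths roughly double at each step and hyperbolicity is no longer evident (the translation subgroup is not conjugate separated in $C_{2}*C_{2}$), whereas the paper's presentations keep short relators; since the quoted statement only asserts existence of such a family, your construction suffices.
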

\vspace{3mm}

We apply a result of  Kharlampovich and Myasnikov (\cite[Corollary 2]{KharMia}) to show that the groups presented by the examples constructed in theorem \ref{fp tor len} are word hyperbolic (proposition \ref{conjsepto}). 

The group-theoretic construction of the $\overline{P}_{n}$ also appears in Cirio \emph{et.~al.} \cite[Example 5.16]{Cirio et al}. Moreover, Leary and Nucinkis \cite[\S5 Corollary 7]{LearyNuc} give an alternative group-theoretic construction, which we describe in theorem \ref{LN construction}. 

We then use these constructions along with results from \textsection \ref{torsec} to prove our main result:

\vspace{3mm}
\begin{thmn}[\ref{fp inf torlen}]
There exists a $2$-generator recursive presentation $Q$ for which $\torlen(\overline{Q}) = \omega$.
\end{thmn}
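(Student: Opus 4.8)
The plan is to build a countably generated recursively presented group of infinite torsion length and then apply Theorem~\ref{3 gen torlen} to it, which lowers the number of generators to two without changing the torsion length. For the first half I would simply glue together the presentations from Theorem~\ref{fp tor len}: since the family $\{P_n\}_{n\in\mathbb{N}}$ is uniform, with generators and relators explicitly indexed by $n$, the presentation $P_\infty$ whose generating set (respectively relator set) is the disjoint union of the generating sets (respectively relator sets) of the $P_n$ is a genuine recursive presentation, and $\overline{P}_\infty$ is the free product $\overline{P}_1\ast\overline{P}_2\ast\overline{P}_3\ast\cdots$. (A restricted direct sum of the $\overline{P}_n$ would do just as well; the free product is convenient because its presentation is literally the union of the $P_n$.)

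The key point is then that $\torlen(\overline{P}_\infty)=\infty$, which I would deduce from monotonicity of torsion length under retracts. For each $n$ the free factor $\overline{P}_n$ is a retract of $\overline{P}_\infty$, with retraction $r_n\colon\overline{P}_\infty\to\overline{P}_n$ the identity on $\overline{P}_n$ and trivial on all other free factors. Suppose $\torlen(\overline{P}_\infty)=m<\infty$, so that $\overline{P}_\infty/\tor_m(\overline{P}_\infty)$ is torsion-free; then its subgroup $\overline{P}_n/(\overline{P}_n\cap\tor_m(\overline{P}_\infty))$ is torsion-free, so by the universal property of $\tor_\infty$ we get $\tor_\infty(\overline{P}_n)\subseteq\overline{P}_n\cap\tor_m(\overline{P}_\infty)$. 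On the other hand, since homomorphisms carry $\tor_m$ into $\tor_m$ and $r_n$ fixes $\overline{P}_n$ pointwise, $\overline{P}_n\cap\tor_m(\overline{P}_\infty)=r_n\bigl(\overline{P}_n\cap\tor_m(\overline{P}_\infty)\bigr)\subseteq\tor_m(\overline{P}_n)$. Hence $\tor_\infty(\overline{P}_n)\subseteq\tor_m(\overline{P}_n)$, i.e.\ $\torlen(\overline{P}_n)\le m$; but $\torlen(\overline{P}_n)=n$ by Theorem~\ref{fp tor len}, so $m\ge n$ for every $n$, which is absurd. Therefore $\torlen(\overline{P}_\infty)=\infty$.

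To finish, apply Theorem~\ref{3 gen torlen} to the countably generated recursive presentation $P_\infty$: this yields a $2$-generator recursive presentation $Q:=\tgen(P_\infty)$ with $\overline{P}_\infty$ embedded in $\overline{Q}$ and $\torlen(\overline{Q})=\torlen(\overline{P}_\infty)=\infty$, which is the assertion of the theorem.

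Essentially all the difficulty lives in the two theorems being cited, Theorems~\ref{fp tor len} and~\ref{3 gen torlen}; the only additional ingredient is the monotonicity of torsion length under retracts used above, and I anticipate no real obstacle there. That monotonicity rests only on the facts that homomorphisms respect the $\tor_n$ filtration and that $\tor_\infty$ is annihilated by every homomorphism to a torsion-free group, both of which should already appear among the elementary results of \textsection\ref{torsec}; so in the final write-up the middle paragraph may well shrink to a short citation.
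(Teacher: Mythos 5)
Your construction and endgame are exactly the paper's: form the free product presentation $P_\infty := P_0 * P_1 * P_2 * \cdots$ of the groups from theorem \ref{fp tor len} (a countably generated recursive presentation, since the $P_n$ are uniformly computable), and then apply theorem \ref{3 gen torlen} to get $Q := \tgen(P_\infty)$. The only place you diverge is the middle step, the verification that $\torlen(\overline{P}_\infty)=\infty$. The paper gets this from proposition \ref{tor through products}, which says that $\tor_j$ of a free product is the normal closure of the union of the $\tor_j$ of the factors and that the quotient is the free product of the quotients; this yields corollary \ref{prod cor} ($\torlen$ of a free product is the supremum of the factors' torsion lengths), and also the stronger statement of lemma \ref{non-hopf inf} that $\overline{P}_\infty/\tor_1(\overline{P}_\infty)\cong\overline{P}_\infty$, whence non-Hopfianness and infinite torsion length. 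You instead prove that torsion length is monotone under retracts: using that any homomorphism carries $\tor_m$ into $\tor_m$ (an easy induction from lemma \ref{pre induct}(b), not stated in the paper but unproblematic) together with lemma \ref{leftadj}, you show $\overline{P}_n\cap\tor_m(\overline{P}_\infty)=\tor_m(\overline{P}_n)\supseteq\tor_\infty(\overline{P}_n)$ and hence $\torlen(\overline{P}_n)\le\torlen(\overline{P}_\infty)$ for each $n$. This is correct, and the retract hypothesis is genuinely needed since the paper shows torsion length is not monotone under arbitrary subgroups. Your route is more elementary and more flexible (it applies verbatim to restricted direct products, where the free-product machinery does not), but it only gives the inequality $\torlen(\overline{P}_\infty)\ge\sup_n n$; the paper's route, essentially one citation of corollary \ref{prod cor}, additionally identifies the successive torsion quotients of $\overline{P}_\infty$ and exhibits it as non-Hopfian. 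Either argument completes the proof.
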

\vspace{3mm}

Are there finitely presented groups of infinite torsion length? We do not know, but consider this an interesting question for future research.

Every nilpotent group has torsion length at most $1$ (\cite[5.2.7]{Rob}). In \textsection \ref{solsec}, we show that this is not necessarily the case for polycyclic groups: 
\vspace{3mm}
\begin{corn}[\ref{sol eg cor}]
There exists a finitely presented polycyclic group of torsion length $2$.
\end{corn}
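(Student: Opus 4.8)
The plan is to write down an explicit finitely presented polycyclic group $G$ with $\torlen(G)=2$, obtained as a semidirect product of the Klein bottle group by $C_2$. Let $K=\langle u,v \mid vuv^{-1}=u^{-1}\rangle\cong\mathbb{Z}\rtimes\mathbb{Z}$; this is torsion-free and polycyclic. Let $\sigma\in\aut(K)$ be the order-two automorphism with $\sigma(u)=u^{-1}$ and $\sigma(v)=v^{-1}u$. Equivalently, putting $y=v$ and $t=v^{-1}u$ so that $y^2t^2=1$ and $K=\langle y,t\mid y^2t^2\rangle$, the automorphism $\sigma$ simply interchanges $y$ and $t$. Set $G=K\rtimes_\sigma C_2$ with $C_2=\langle s\rangle$; concretely $G=\langle y,s\mid s^2,\ sy^2s^{-1}y^2\rangle$, since $s y^2 s^{-1}=y^{-2}$ is the relation $\sigma(y)^2=y^{-2}$, i.e. $t^2=y^{-2}$. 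Then $G$ is finitely presented, and the series $1\lhd\langle u\rangle\lhd K\lhd G$ (with successive quotients $\mathbb{Z}$, $\mathbb{Z}$, $C_2$) shows that $G$ is polycyclic.

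The first step is to pin down the torsion of $G$. Since $K$ is torsion-free, an element $ks^{\varepsilon}$ ($k\in K$, $\varepsilon\in\{0,1\}$) is torsion precisely when either $\varepsilon=0$ and $k=1$, or $\varepsilon=1$ and $(ks)^2=k\,\sigma(k)=1$ (so that $ks$ has order $2$). Hence the torsion elements of $G$ are exactly $1$ together with $\{ks : \sigma(k)=k^{-1}\}$. Working in the normal form $k=u^mv^n$, using $v^{-1}uv=u^{-1}$ and the fact that $v^2$ is central in $K$, one computes $\sigma(u^mv^n)$ and $(u^mv^n)^{-1}$ explicitly and finds that $\sigma(k)=k^{-1}$ holds if and only if $n$ is even. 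So the set of torsion elements is $\{1\}\cup\{u^mv^{2l}s : m,l\in\mathbb{Z}\}$, and each $u^mv^{2l}s$ has order $2$.

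The second step is to compute $\tor_1(G)$ (the subgroup generated by the torsion elements) and show it is \emph{proper}. A product of two torsion generators gives $(u^mv^{2l}s)(u^{m'}v^{2l'}s)=u^mv^{2l}\,\sigma(u^{m'}v^{2l'})=u^{m-m'}v^{2l-2l'}\in\langle u,v^2\rangle$, using that $v^2$ commutes with $u$; more generally a product of an even number of the torsion generators lies in $\langle u,v^2\rangle$, and a product of an odd number lies in $\langle u,v^2\rangle s$. Hence $\tor_1(G)=\langle u,v^2\rangle\rtimes\langle s\rangle$, which has index $2$ in $G$ (as $\langle u,v^2\rangle$ is the preimage of $2\mathbb{Z}$ under $K\to K/\langle u\rangle\cong\mathbb{Z}$, hence index $2$ in $K$). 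In particular $\tor_1(G)\neq G$, while $G/\tor_1(G)\cong C_2$ is finite, hence generated by its torsion; so $\tor_2(G)=G=\tor_\infty(G)$ and $\torlen(G)=2$, as required. (If the statement is phrased as a corollary of a section theorem, this group is the witness one extracts from that construction.)

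The main obstacle is exactly the properness of $\tor_1(G)$: the most naive polycyclic candidates collapse, their torsion elements generating the whole group (for instance $D_\infty$ itself, or $\mathbb{Z}^2\rtimes C_2$ for the various order-two actions on $\mathbb{Z}^2$, or $D_\infty$ with a square root of the translation adjoined — in each case the torsion elements multiply out to everything). The role of the choice of $K$ together with the coordinate-swapping $\sigma$ is to confine the torsion to the index-two subgroup $\langle u,v^2\rangle\rtimes\langle s\rangle$: the equation $\sigma(k)=k^{-1}$ excludes every $k\in K\setminus\langle u,v^2\rangle$ because such a $k$ involves an odd power of $v$, and the relations of the Klein bottle group then obstruct $\sigma(k)=k^{-1}$. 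Carrying out the computation of $\{k\in K:\sigma(k)=k^{-1}\}$ carefully is the technical heart of the proof; everything else is bookkeeping with the normal form of $G$.
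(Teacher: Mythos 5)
Your construction is correct, and it takes a genuinely different route from the paper's. You build the group as $K\rtimes_{\sigma}C_{2}$ for the Klein bottle group $K$ and verify everything by hand with the normal form $u^{m}v^{n}$: the condition $\sigma(k)=k^{-1}$ does fail exactly when $n$ is odd, so the torsion elements are $\{e\}\cup\{u^{m}v^{2l}s\}$, $\tor_{1}(G)=\langle u,v^{2},s\rangle$ has index $2$, and $G/\tor_{1}(G)\cong C_{2}$ gives $\torlen(G)=2$; the series $\{e\}\unlhd\langle u\rangle\unlhd K\unlhd G$ gives polycyclicity. The paper instead takes $\overline{P}_{2,2,2}=\langle x,y,z\mid x^{2},y^{2},xy=z^{2}\rangle$, reads off $\torlen(\overline{P}_{2,2,2})=2$ from proposition \ref{first eg} (via the amalgam structure $(C_{2}*C_{2})*_{\langle xy\rangle=\langle z^{2}\rangle}\mathbb{Z}$ and the theorem that torsion in an amalgam is conjugate into a factor), and proves polycyclicity in proposition \ref{solvable eg} by noting $\langle z^{2}\rangle$ is normal with quotient $C_{2}*C_{2}$. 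Your approach is more elementary and self-contained (no Bass--Serre-type torsion theorem); the paper's gives a one-line identification of the torsion subgroup and a whole family $P_{j,k,l}$ of torsion-length-$2$ groups, of which only $j=k=l=2$ is solvable. Amusingly, the two groups coincide: in your $G=\langle y,s\mid s^{2},\,(sy^{2})^{2}\rangle$ the elements $a=s$ and $b=sy^{2}$ are involutions with $ab=y^{2}$, and one checks $G\cong\langle a,b,c\mid a^{2},b^{2},ab=c^{2}\rangle=P_{2,2,2}$ --- which is precisely ``$D_{\infty}$ with a square root of the translation adjoined.'' Consequently the parenthetical claim in your final paragraph, that this candidate collapses because its torsion elements generate everything, is false: your own (correct) computation shows its $\tor_{1}$ has index $2$. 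That aside should be deleted, but it does not affect the validity of the proof.
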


\vspace{3mm}

Unfortunately, we have been unable to construct finitely generated solvable groups of torsion length greater than two. One can also ask if there exist finitely generated solvable groups of infinite torsion length. 

\subsection*{Acknowledgements} We would like to thank Claudia Pinzari for her interest in our work, Jack Button and Andrew Glass for their comments and suggestions, and Ian Leary for his thoughtful conversations.

\section{Preliminaries}  \label{prelim}

\subsection{Notation}

If $P$ is a group presentation, we denote by $\overline{P}$ the group presented by $P$. A presentation $P=\langle X|R\rangle$ is said to be a \emph{recursive presentation} if $X$ is a finite set and $R$ is a recursive enumeration of relations; $P$ is said to be a \emph{countably generated recursive presentation} if instead $X$ is a recursive enumeration of generators. 
A group $G$ is said to be \emph{finitely} (respectively,   \emph{recursively}) \emph{presentable} if $G\cong \overline{P}$ for some finite (respectively,   recursive) presentation $P$. 
If $P,Q$ are group presentations then we denote their free product presentation by $P*Q$: this is given by taking the disjoint union of their generators and relations. If $g_{1}, \ldots, g_{n}$ are elements of a group $G$, we write $\langle   g_{1}, \ldots, g_{n} \rangle$ for the subgroup in $G$ generated by these elements and $\llangle g_{1}, \ldots, g_{n} \rrangle^{G}$ for the normal closure of these elements in $G$. Let $\omega$ denote the smallest infinite ordinal. Let $\ord(g)$ denote the order of a group element $g$; recall that $g\in G$ is \emph{torsion} if $1\leq \ord(g) <\omega$. Write $\tor(G):=\{g \in G\ |\ g \textnormal{ is torsion}\}$.  Let $|X|$ denote the cardinality of a set $X$. If $X$ is a set, let $X^{-1}$ be a set of the same cardinality as and disjoint from $X$ along with a fixed bijection ${*}^{-1}: X \to X^{-1}$. Write $X^{*}$ for the set of finite words on $X \cup X^{-1}$.

\subsection{Preliminary facts in group theory}

We now collect a few lemmas that we will need later in this paper. They all must be well known, but we have been unable to find suitable references.

\begin{lem} \label{freepfrees}
Let $G_{1}$ and $G_{2}$ be non-trivial groups, and suppose $|G_{1}|>2$. Then, $G_{1}*G_{2}$ contains a non-abelian free subgroup.
\begin{proof}
Since $|G_{1}|>2$, there exist $y$ and $z$ such that $yz\neq e$. Let $x$ be a non trivial element of $G_{2}$. The reader will easily check that the elements $yxz$ and $xyxzx$ freely generate a free subgroup in $G_{1}*G_{2}$.
\end{proof}
\end{lem}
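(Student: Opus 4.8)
The plan is to name two explicit elements of $G_{1}*G_{2}$ and then verify, via the normal form theorem for free products, that they generate a free subgroup of rank $2$ — which is in particular non-abelian. First I would fix the data. Since $|G_{1}|>2$ one can choose $y,z\in G_{1}\setminus\{e\}$ with $yz\neq e$: if some non-trivial $y$ has $y^{2}\neq e$, take $z=y$; otherwise $G_{1}$ is an elementary abelian $2$-group of order at least $4$, and any two distinct non-trivial elements $y,z$ will do. Note that $yz\neq e$ is equivalent to $zy\neq e$, since either condition says $z\neq y^{-1}$. Now fix any non-trivial $x\in G_{2}$ and put $a:=yxz$ and $b:=xyxzx$; viewed in $G_{1}*G_{2}$ these are reduced words (alternating, non-trivial syllables) of syllable length $3$ and $5$. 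One could alternatively run a ping-pong argument, but the direct normal-form check is about as quick.

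The substance of the argument is to show that a non-empty reduced word $w=s_{1}\cdots s_{k}$ in $\{a^{\pm1},b^{\pm1}\}$ — meaning no cancelling pair $aa^{-1}$, $a^{-1}a$, $bb^{-1}$, $b^{-1}b$ occurs among consecutive $s_{i}$ — represents a non-trivial element of $G_{1}*G_{2}$. Substituting the defining words for $a^{\pm1}$, $b^{\pm1}$ and freely reducing, the only junctions at which adjacent pieces can meet inside a common free factor are those between two $a^{\pm1}$'s or between two $b^{\pm1}$'s; hence $w$ assembles into maximal blocks of the form $a^{\pm m}$ and $b^{\pm m}$, and I would next record three facts about such blocks. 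First, each $a^{m}$ ($m\neq0$) is a reduced word that begins and ends with a $G_{1}$-syllable ($y^{\pm1}$ or $z^{\pm1}$), the internal merges being $z\cdot y=zy$ or its inverse, which is non-trivial. Second, each $b^{m}$ ($m\neq0$) is a reduced word that begins and ends with a $G_{2}$-syllable ($x^{\pm1}$); here the internal merges are $x\cdot x=x^{2}$, and if $x^{2}\neq e$ this is a genuine non-trivial syllable flanked by $G_{1}$-syllables, while if $x^{2}=e$ the merged syllable vanishes but only brings a trailing $z$ and a leading $y$ together, again yielding the non-trivial syllable $zy$, flanked by $x^{\pm1}$-syllables; in neither case does any cancellation cascade further. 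Third, it follows that every block already has a normal form of positive syllable length.

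Finally, because an $a$-block begins and ends in $G_{1}$ while a $b$-block begins and ends in $G_{2}$, concatenating the successive (necessarily alternating-type) blocks of $w$ produces no further cancellation, so $w$ has normal form of positive syllable length and is therefore $\neq e$. This shows $\{a,b\}$ freely generates a free subgroup of $G_{1}*G_{2}$. The one place I expect to need care — the main obstacle — is the $x^{2}=e$ case above, where a $b$-block genuinely shrinks; what rescues it is that the collapse exposes the pair $z,y$ rather than a pair $x,x^{-1}$, and this is exactly why $b$ is built as $xyxzx$ and why we insisted that $zy\neq e$.
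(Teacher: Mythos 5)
Your proposal is correct and follows exactly the route the paper intends: it takes the same two elements $yxz$ and $xyxzx$ and carries out the normal-form verification that the paper leaves to the reader, including the only delicate point (the collapse of the $x\cdot x$ junction when $x^{2}=e$, which is rescued by $zy\neq e$). Nothing further is needed.
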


\begin{lem} \label{c2sol}
$C_{2}*C_{2}$ is polycyclic.
\begin{proof}
Let the two copies of $C_{2}$ be generated by $x$ and $y$ respectively. Then, since $xxyx=yx$ and $yxyy=yx$, it follows that the cyclic subgroup generated by $xy$ is normal. It is also easy to see that $C_{2}*C_{2}/{\langle xy \rangle}\cong C_{2}.$ Thus $C_{2}*C_{2}$ is polycyclic.
\end{proof}
\end{lem}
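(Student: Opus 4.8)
The plan is to exhibit a short subnormal series for $C_{2}*C_{2}$ with cyclic quotients; recall that a group is polycyclic precisely when it admits such a series. Write $x$ and $y$ for generators of the two $C_{2}$ factors, so that $x^{2}=y^{2}=e$, and set $t:=xy$. I claim that the cyclic subgroup $\langle t \rangle$ is normal in $C_{2}*C_{2}$, with quotient isomorphic to $C_{2}$.

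For normality, since $\langle t \rangle$ is generated by a single element it suffices to check that conjugation by each of $x$ and $y$ carries $t$ into $\langle t \rangle$. Using the relations $x^{2}=e$ and $y^{2}=e$ one computes $xtx^{-1}=x(xy)x=yx$ and $yty^{-1}=y(xy)y=yx$, and $yx=y^{-1}x^{-1}=(xy)^{-1}=t^{-1}\in\langle t \rangle$; hence $\langle t \rangle \unlhd C_{2}*C_{2}$. This is precisely the computation $xxyx=yx=yxyy$ recorded in the statement. For the quotient, killing $t=xy$ forces $x=y^{-1}=y$, so $(C_{2}*C_{2})/\langle t \rangle$ is generated by a single involution, the common image of $x$ and $y$; conversely $C_{2}$ is visibly a quotient, so $(C_{2}*C_{2})/\langle t \rangle\cong C_{2}$. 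Therefore $\{e\}\unlhd\langle t \rangle\unlhd C_{2}*C_{2}$ is a subnormal series with cyclic successive quotients $\langle t \rangle$ and $C_{2}$, and $C_{2}*C_{2}$ is polycyclic.

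There is no genuine obstacle here: the only step needing care is the verification that $\langle xy \rangle$ is normal, which is the one-line relator computation above. I note in passing that polycyclicity requires only that the factors be cyclic, so it is immaterial whether $t$ has finite or infinite order — in fact $C_{2}*C_{2}$ is the infinite dihedral group and $\langle t \rangle\cong\mathbb{Z}$, but this is not needed for the argument.
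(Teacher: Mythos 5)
Your proof is correct and follows exactly the same route as the paper: you verify that $\langle xy\rangle$ is normal via the conjugation computations $x(xy)x = yx = (xy)^{-1}$ and $y(xy)y=yx$, identify the quotient as $C_{2}$, and read off polycyclicity from the resulting series. The extra details you supply (that $yx=(xy)^{-1}$ and that $\langle xy\rangle\cong\mathbb{Z}$) are accurate but not needed beyond what the paper already records.
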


\begin{defn}[\cite{KharMia}]
A subgroup $H$ of a group $G$ is said to be \textit{conjugate separated} if for any $x\in G \setminus H$ we have that $H\cap xHx^{-1}$ is finite.
\end{defn}

\begin{lem} \label{conjsep}
Let $A$ and $B$ be groups, and suppose $e\neq a\in A$ and $e\neq b\in B$, with either $\ord(a)\neq 2$ or $\ord(b)\neq 2$. Then for any $x\in A*B \setminus \langle ab \rangle$, $\langle ab \rangle \cap x \langle ab \rangle x^{-1}=\{e\}$. Hence $\langle ab \rangle$ is conjugate separated in $A*B$. Moreover,  if $\ord(a) = \ord(b)=2$, then the subgroup $\langle ab \rangle$ is not conjugate separated.
\begin{proof}
Without loss of generality we may take $\ord(b)\neq 2$. Suppose that  $\langle ab \rangle$ is not conjugate separated in $A*B$. Then, there exists an $x\in A*B\setminus \langle ab \rangle$, and $i, j\in \mathbb{Z} \setminus\{0\}$ such that $x(ab)^{i}x^{-1}=(ab)^{j}$. We can assume that the underlying word of $x$ is reduced. We will induct on the length of $x$ as a reduced word. 
\\Let us first assume that $i > 0$. It follows that there must exist $x'\in A*B$ such that either $x=x'a^{-1}$, or $x=x'b$. This is true because any other eventuality would lead to $x(ab)^{i}x^{-1}$ having a reduced underlying word which begins and ends with a letter from the same group, and an element with such a word clearly cannot belong to $\langle ab \rangle$. Since $\ord(b)\neq 2$, $\langle ab \rangle \cap \langle ba \rangle = \{e\}$. It follows that $x'\neq e$, as $a^{-1}\langle ab \rangle a = b\langle ab \rangle b^{-1} = \langle ba \rangle$. We therefore have that $x'(ba)^{i}{x'}^{-1}=(ab)^{j}$. Assume that $x=x'a^{-1}$, then by reasoning as we did the previous paragraph, we see that $x'=x''b^{-1}$ (the other option, where $x'=x''a$, cannot happen as that would them imply that $x$ was not reduced.) It follows that $x=x''b^{-1}a^{-1}$. Assuming that $x=x'b$, a similar line of reasoning allows us to reach the conclusion that $x=x''ab$, for some element $x''$. In either case, we have that  $x''\langle ab \rangle^{i}{x''}^{-1}=(ab)^{j}$. Applying our induction hypothesis, we see that $x''\in \langle ab \rangle$. Thus $x\in \langle ab \rangle$. The case where $i < 0$ is analogous. 
\\Finally,  if $\ord(a) = \ord(b)=2$, then $aaba=babb=ba=(ab)^{-1}$.
\end{proof}
\end{lem}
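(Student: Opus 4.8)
The plan is to work with normal forms in the free product and its conjugacy theory. Write $g=ab$; since $a\neq e$ and $b\neq e$, the word $g$ is cyclically reduced of syllable length $2$, and for each nonzero integer $n$ the power $g^{n}$ is cyclically reduced of syllable length $2|n|$, so $g$ has infinite order and $\langle ab\rangle\cong\mathbb{Z}$. Suppose for contradiction that the conclusion fails for some $x\in A*B\setminus\langle ab\rangle$; then $g^{j}=xg^{i}x^{-1}$ for some nonzero integers $i,j$. Replacing the pair $(g^{i},g^{j})$ by $(g^{-i},g^{-j})$ if necessary, we may assume $i>0$.

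First I would pin down $j$. Since $g^{i}$ and $g^{j}$ are conjugate and both cyclically reduced of syllable length $\geq 2$, the conjugacy theorem for free products (a standard consequence of the normal form theorem) forces the normal form of $g^{j}$ to be a cyclic permutation of that of $g^{i}$; comparing syllable lengths gives $|j|=i$. The only cyclic permutations of the syllable sequence of $(ab)^{i}$ are $(ab)^{i}$ and $(ba)^{i}$. If $j=-i$ then $g^{j}=(b^{-1}a^{-1})^{i}$, which never equals $(ab)^{i}$ (the two reduced words begin in different factors) and equals $(ba)^{i}$ only when $a^{-1}=a$ and $b^{-1}=b$, i.e. only when $a$ and $b$ both have order $2$; this is excluded by hypothesis, so $j=i$ and $x$ centralizes $g^{i}$.

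It then suffices to show that $C_{A*B}(g^{i})=\langle ab\rangle$: this contradicts $x\notin\langle ab\rangle$, which establishes the displayed claim and hence (as $\{e\}$ is finite) that $\langle ab\rangle$ is conjugate separated. Here I would invoke the standard fact --- provable from the normal form theorem, or from the action of $A*B$ on its Bass--Serre tree, where $g$ is hyperbolic because it is not conjugate into $A$ or $B$ --- that the centralizer in $A*B$ of an element not conjugate into a factor is infinite cyclic. Thus $C_{A*B}(g^{i})=\langle c\rangle$ with $\langle ab\rangle\subseteq\langle c\rangle$, and it remains only to rule out $ab=c^{k}$ with $|k|\geq 2$. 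But $c$ is not conjugate into a factor (else $ab=c^{k}$ would be), so $c$ is conjugate to a cyclically reduced word of syllable length $\ell\geq 2$, whence $c^{k}$ has cyclically reduced syllable length $|k|\ell\geq 4$; since $ab$ has cyclically reduced syllable length $2$, this is impossible, and $\langle c\rangle=\langle ab\rangle$.

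The one delicate point is the second paragraph, namely isolating where the hypothesis enters: it is precisely what forbids $g$ being conjugate to $g^{-1}$. When it fails, the subgroup generated by $a$ and $b$ is infinite dihedral, $x(ab)x^{-1}=(ab)^{-1}$ for a suitable reflection $x$, and $\langle ab\rangle$ is genuinely not conjugate separated, so the hypothesis is essential. A more self-contained variant avoids the conjugacy theorem and the centralizer fact and instead runs an induction on the reduced length of $x$: analysing the cancellations at the two ends of the reduced word for $xg^{i}x^{-1}$ forces $x$ to begin and end with the syllables $b^{-1},a^{-1}$ (or $a,b$), peeling off a conjugate of $g$ and shortening $x$ by one block; here the hypothesis enters through $a^{-1}\langle ab\rangle a=b\langle ab\rangle b^{-1}=\langle ba\rangle$ together with $\langle ab\rangle\cap\langle ba\rangle=\{e\}$, which holds unless $\ord(a)=\ord(b)=2$.
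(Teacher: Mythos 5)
Your argument is correct, but it follows a genuinely different route from the paper's. The paper runs a direct induction on the reduced word length of $x$: it analyses the cancellation at the two ends of $x(ab)^{i}x^{-1}$ to force $x$ to end in $a^{-1}$ or $b$, peels off a syllable pair using $a^{-1}\langle ab\rangle a=b\langle ab\rangle b^{-1}=\langle ba\rangle$ and $\langle ab\rangle\cap\langle ba\rangle=\{e\}$ (where the order hypothesis enters), and shortens $x$ --- essentially the ``self-contained variant'' you sketch in your last paragraph. You instead quote two standard structural results: the conjugacy theorem for free products (conjugate cyclically reduced elements of syllable length $\geq 2$ are cyclic permutations of one another) and the fact that the centralizer of an element not conjugate into a factor is infinite cyclic. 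Your syllable-length bookkeeping is right: $|j|=i$ follows from comparing lengths, $j=-i$ is excluded precisely because it would force $\ord(a)=\ord(b)=2$, and the maximal cyclic subgroup containing $ab$ must equal $\langle ab\rangle$ because a proper root $c$ with $ab=c^{k}$, $|k|\geq 2$, would give $ab$ cyclically reduced syllable length at least $4$. What your approach buys is a cleaner structural picture --- it isolates the hypothesis as exactly the obstruction to $ab$ being conjugate to its inverse (the infinite dihedral degeneration), and it pins down the conjugating element as a centralizing element --- at the cost of importing the conjugacy and centralizer theorems rather than working only from the normal form theorem as the paper does. Both are complete proofs; yours is arguably more illuminating, the paper's more elementary.
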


\section{Torsion} \label{torsec}

\subsection{Introduction to torsion length} 

\begin{defn} \label{defoftorsion}
Given a group $G$, we inductively define $\tor_{n}(G)$ as follows:
\[
\tor_{0}(G):=\{e\},
\]
\[
\tor_{n+1}(G):=\llangle\ \{g \in G\ |\ g\tor_{n}(G) \in \tor \big( G/\tor_{n}(G)\big) \}\ \rrangle ^{G},
\]
\[
\tor_{\omega}(G):=\bigcup_{n \in \mathbb{N}}\tor_{n}(G).
\]
\end{defn}

Definition \ref{defoftorsion} first appeared in \cite{Chiodo2}. The following lemma also appears as \cite[Proposition 4.9]{Chiodo2}.

\begin{lem} \label{leftadj}
$G/\tor_{\omega}(G)$ is torsion-free. Moreover, if $f:G\rightarrow H$ is a group homomorphism from $G$ to a torsion-free group $H$, then $\tor_{\omega}(G)\leq \ker(f)$.
\begin{proof}
If $x^{n}\in \tor_{\omega}(G)$ for some $n >0 $, then there exists $m\in \mathbb{N}$ such that $x^{n}\in \tor_{m}(G)$. It follows that $x \in \tor_{m+1}(G)$, and thus that $x\in \tor_{\omega}(G)$. Thus $G/\tor_{\omega}(G)$ is torsion-free. 
\\If $f:G\rightarrow H$ is a group homomorphism from $G$ to a torsion-free group $H$, it follows that $\tor(G)\leq \ker(f)$, and thus that $\tor_{1}(G)\leq \ker(f)$. Then, $f$ factors through $G/\tor_{1}(G)$. By induction, we see that $\tor_{n}(G)\leq \ker(f)$ for all $n$, and thus that $\tor_{\omega}(G)\leq \ker(f)$.
\end{proof}
\end{lem}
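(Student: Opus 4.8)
The plan is to dispatch the two assertions separately, each by a short induction up the chain $\{\tor_i(G)\}_{i}$, the essential point being that $\tor_\infty(G)=\bigcup_i\tor_i(G)$ is an \emph{ascending} union of subgroups.

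For torsion-freeness of $G/\tor_\infty(G)$: suppose some coset $x\tor_\infty(G)$ is a torsion element of the quotient, so that $x^n\in\tor_\infty(G)$ for some $n>0$. Since the $\tor_i(G)$ form an ascending chain, $x^n$ already lies in $\tor_m(G)$ for some $m$. Then $x\tor_m(G)$ is torsion in $G/\tor_m(G)$, so the defining formula in Definition \ref{defoftorsion} puts $x\in\tor_{m+1}(G)\subseteq\tor_\infty(G)$; that is, the coset was trivial. Hence $G/\tor_\infty(G)$ has no nontrivial torsion.

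For the universal property, let $f\colon G\to H$ with $H$ torsion-free. I would show $\tor_n(G)\leq\ker f$ for all $n$ by induction, the case $n=0$ being immediate since $\tor_0(G)=\{e\}$. Assuming $\tor_n(G)\leq\ker f$, the homomorphism $f$ factors through some $\bar f\colon G/\tor_n(G)\to H$. Every torsion element of $G/\tor_n(G)$ maps under $\bar f$ to a torsion element of $H$, hence to $e$; so the entire set $\{\,g\in G : g\tor_n(G)\in\tor(G/\tor_n(G))\,\}$ is contained in $\ker f$. As $\ker f\unlhd G$, the normal closure of that set — which is precisely $\tor_{n+1}(G)$ — also lies in $\ker f$. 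Taking the union over $n$ gives $\tor_\infty(G)\leq\ker f$.

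I do not expect any genuine obstacle: both parts are formal consequences of the inductive definition. The only point to watch is that each $\tor_{n+1}(G)$ is defined as a \emph{normal} closure, but this is harmless since kernels of homomorphisms are automatically normal. Taken together, the two assertions say exactly that $G\mapsto G/\tor_\infty(G)$ is a reflection of $G$ onto the full subcategory of torsion-free groups (i.e.\ a left adjoint to the inclusion functor), which accounts for the name of the lemma.
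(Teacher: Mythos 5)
Your proposal is correct and follows essentially the same route as the paper's own proof: the torsion-freeness part uses exactly the same observation that $x^n\in\tor_m(G)$ forces $x\in\tor_{m+1}(G)$, and the universal property is proved by the same induction via factoring $f$ through $G/\tor_n(G)$. Your explicit remark that the normal closure causes no trouble because kernels are normal is a small clarification the paper leaves implicit, but the argument is the same.
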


The following lemma records some facts that will be used later. We leave the proof as an exercise.

\begin{lem}\label{normal} \label{simplify} \label{pre induct} \label{tor under sg} \label{interchange}
Let $G$ be a group, with $H\leqslant G$. Let $i, j \in \mathbb{N}$.
\\a.$)$ $\tor_{i+1}(G)=\langle\ \{g \in G\ |\ g\tor_{i}(G) \in \tor \big( G/\tor_{i}(G)\big) \}\ \rangle.$ 
\\b.$)$ $\tor_{i+1}(G)=\langle\ \{g \in G\ |\ g^n \in \tor_{i}(G)\textnormal{ for some }n>0  \}\ \rangle.$ 
\\c.$)$ $\tor_{i+1}(G)/\tor_{i}(G)=\tor_{1} \big( G/\tor_{i}(G) \big)$ as subgroups of $G/\tor_{i}(G)$. 
\\d.$)$ $\big(G/\tor_{i}(G)\big)/\tor_{j} \big( G/\tor_{i}(G) \big) \cong G/\tor_{i+j}(G)$ via the obvious quotient map.
\\e.$)$ $\tor_{i}(H)\leqslant \tor_{i}(G)$.
\end{lem}

The following is a standard result from combinatorial group theory. 

\begin{lem}\label{enum Tor i}
Let $P= \langle X|R \rangle$ be a recursive presentation. Then the elements of $X^{*}$ which represent elements in the subgroup $\tor_{i}(\overline {P})$ are recursively enumerable, uniformly in $P$ and in each $i \in \mathbb{N}$. Hence the words representing elements in $\tor_{\omega}(\overline {P})$ are also recursively enumerable.
\end{lem}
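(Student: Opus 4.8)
The plan is to induct on $i$, at each stage producing a Turing machine which, given $P=\langle X|R\rangle$, enumerates precisely the words of $X^*$ representing elements of $\tor_i(\overline{P})$, with the machine depending uniformly (i.e.\ computably) on $P$ and on $i$. The base case $i=0$ is trivial: $\tor_0(\overline{P})=\{e\}$, so we enumerate exactly the words $w\in X^*$ with $w=e$ in $\overline{P}$, which is the standard fact that the word problem of a recursive presentation is recursively enumerable (run the Dehn-type search over all ways of inserting/deleting relators from $R$ and free cancellations). For the inductive step, suppose we have a procedure enumerating the words representing $\tor_i(\overline{P})$. By Lemma \ref{pre induct}~b), a word $w$ represents an element of $\tor_{i+1}(\overline{P})$ if and only if $w$ lies in the subgroup generated by $\{g\in\overline{P}\mid g^n\in\tor_i(\overline{P})\text{ for some }n>0\}$. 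So it suffices to (a) enumerate the words $v$ with $v^n\in\tor_i(\overline{P})$ for some $n>0$, and then (b) enumerate the subgroup they generate.

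For (a): dovetail over all words $v\in X^*$ and all exponents $n\geq 1$, and for each pair $(v,n)$ run the enumeration of $\tor_i(\overline{P})$ from the inductive hypothesis searching for the word $v^n$ to appear; whenever it does, output $v$. This enumerates exactly the relevant generating set $S_i:=\{v\in X^*\mid v^n\text{ represents an element of }\tor_i(\overline{P})\text{ for some }n>0\}$. For (b): given a recursive enumeration of a set $S_i$ of words, the set of words representing elements of $\langle S_i\rangle^{\overline P}$ is recursively enumerable — dovetail over finite products $s_1^{\pm1}\cdots s_k^{\pm1}$ of words already output by the $S_i$-enumeration, and over all words $u\in X^*$, searching (using the r.e.\ word problem of $P$) for a proof that $u$ equals such a product in $\overline{P}$; output $u$ when found. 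All of these searches dovetail together into a single enumeration procedure, and each construction is plainly uniform in the description of the previous machine and hence, by composing through the induction, uniform in $P$ and $i$. This completes the induction. Finally, $\tor_\infty(\overline P)=\bigcup_{i\in\mathbb N}\tor_i(\overline P)$, so dovetailing the enumerations for $\tor_i(\overline P)$ over all $i\in\mathbb N$ yields a recursive enumeration of the words representing $\tor_\infty(\overline P)$.

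I expect no serious obstacle here — this is a routine dovetailing argument — but the one point requiring a little care is the bookkeeping of \emph{uniformity}: one must check that "the machine for stage $i+1$" is obtained from "the machine for stage $i$" by a fixed computable transformation (composition of the r.e.\ closure operations above), so that unwinding the recursion gives a single algorithm taking $(P,i)$ as input. The other mild subtlety is that in step (a) we search for $v^n$ among the \emph{words} enumerated for $\tor_i(\overline P)$; this is legitimate precisely because the inductive hypothesis gives us an enumeration of all words (not merely some word for each group element) representing elements of $\tor_i(\overline P)$, so $v^n\in\tor_i(\overline P)$ as a group element is detected exactly when the literal word $v^n$ is eventually listed. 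I would remark that one could equally phrase (b) using the normal closure (Lemma \ref{pre induct} vs.\ Definition \ref{defoftorsion}), but the subgroup formulation of Lemma \ref{pre induct}~b) is the most economical input to the enumeration.
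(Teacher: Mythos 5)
Your argument is correct: the induction via Lemma \ref{pre induct}~b), the dovetailed search for powers $v^{n}$ landing in the previously enumerated set, the r.e.\ closure under taking the generated subgroup, and the attention to uniformity and to the fact that the inductive hypothesis enumerates \emph{all} representing words are exactly what is needed. The paper offers no proof at all (it declares the lemma a standard fact of combinatorial group theory), and your dovetailing induction is precisely the standard argument being invoked.
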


We make the following definition; the same notion appears in Cirio \emph{et.~al.} \cite{Cirio et al} as the \emph{Torsion Degree} (\cite[Definition 5.5]{Cirio et al}) of a group.

\begin{defn} \label{tordefn}
We define the \emph{Torsion Length} of $G$, $\torlen(G)$, by the smallest ordinal $n$ such that $\tor_{n}(G)=\tor_{\omega}(G)$. 
\end{defn}

The following lemma summarizes some basic properties of this invariant. Again, we omit the proof.

\begin{lem}\label{sum lengths}\mbox{}
\\a.$)$ If $G$ is a non-trivial torsion group $($i.e. $\tor(G)=G)$, then $\torlen(G)=1$.
\\b.$)$ If  $n\leq \torlen(G)$, then  $\torlen \big(G/\tor_{n}(G)\big)=\torlen(G)-n$, with the convention $\omega -n = \omega$ and $\omega -\omega =0$.
\\c.$)$ $\torlen(G)$ is the smallest ordinal $n$ for which $G/\tor_{n}(G)$ is torsion-free.
\end{lem}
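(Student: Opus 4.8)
## Proof proposal for Lemma \ref{sum lengths}

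The plan is to prove the three parts in order, using the structural facts from Lemma \ref{pre induct} and Lemma \ref{interchange}, and the definition of torsion length.

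For part a.), if $G$ is a non-trivial torsion group, then $\tor(G) = G$, so by Lemma \ref{pre induct} b.) we have $\tor_1(G) = \langle G \rangle = G$. Hence $\tor_1(G) = \tor_\infty(G)$, and since $G$ is non-trivial, $\tor_0(G) = \{e\} \neq G = \tor_1(G)$, so the smallest $n$ with $\tor_n(G) = \tor_\infty(G)$ is $n=1$. The statement about finite groups is the special case where $G$ is finite and non-trivial.

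For part b.), I would argue by comparing the two towers of subgroups. By Lemma \ref{interchange}, for any $j \geq 0$ we have $\big(G/\tor_i(G)\big)/\tor_j\big(G/\tor_i(G)\big) \cong G/\tor_{i+j}(G)$; equivalently, under the quotient map $G \to G/\tor_i(G)$, the preimage of $\tor_j\big(G/\tor_i(G)\big)$ is exactly $\tor_{i+j}(G)$ (this uses $i \leq \torlen(G)$ only insofar as it makes the relevant subgroups behave well, but in fact the isomorphism of Lemma \ref{interchange} holds unconditionally). Now suppose first $\torlen(G) = N < \infty$ and $i \leq N$. Then $\tor_j\big(G/\tor_i(G)\big) = \tor_{j+1}\big(G/\tor_i(G)\big)$ iff $\tor_{i+j}(G) = \tor_{i+j+1}(G)$, so the smallest such $j$ is $N - i$: indeed for $j < N-i$ we have $i+j < N$ so $\tor_{i+j}(G) \neq \tor_{i+j+1}(G)$ (by minimality of $N$, together with the fact that the tower $\tor_k(G)$ is non-decreasing and stabilises exactly at $N$), while for $j = N-i$ we get $\tor_N(G) = \tor_{N+1}(G)$. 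Hence $\torlen(G/\tor_i(G)) = N - i$. If instead $\torlen(G) = \infty$, then $\tor_k(G) \subsetneq \tor_{k+1}(G)$ for all $k$ wait — this is the subtle point, since torsion length infinite means the tower never stabilises, so indeed $\tor_k(G) \neq \tor_{k+1}(G)$ for all $k$; thus $\tor_j(G/\tor_i(G)) \neq \tor_{j+1}(G/\tor_i(G))$ for all $j$, giving $\torlen(G/\tor_i(G)) = \infty = \infty - i$. The convention $\infty - \infty = 0$ covers the degenerate case $i = \infty$, which one reads as $G/\tor_\infty(G)$, torsion-free by Lemma \ref{leftadj}, hence of torsion length $0$.

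For part c.), by part b.) with the relevant value of $i$, $\torlen(G/\tor_i(G)) = 0$ exactly when $i = \torlen(G)$ (or $i \geq \torlen(G)$, but $\tor_i(G) = \tor_{\torlen(G)}(G)$ for such $i$). And $\torlen(H) = 0$ means $\tor_0(H) = \tor_\infty(H)$, i.e. $\tor_1(H) = \{e\}$, i.e. $\tor(H) = \{e\}$, i.e. $H$ is torsion-free. Combining, $G/\tor_i(G)$ is torsion-free iff $i \geq \torlen(G)$, so the smallest such $i$ is $\torlen(G)$ itself; if no finite $i$ works then $\torlen(G) = \infty$. The main obstacle I expect is simply being careful in part b.) about the relationship ``tower does not stabilise before step $N$'' versus ``consecutive terms differ before step $N$'' — these are equivalent because the $\tor_k(G)$ form an increasing chain, so a single coincidence $\tor_k(G) = \tor_{k+1}(G)$ already forces stabilisation from that point on (using Lemma \ref{pre induct} c.) to see that $\tor_1(G/\tor_k(G))$ being trivial propagates upward). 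Once that monotonicity-plus-stabilisation observation is isolated as a sub-claim, the rest is bookkeeping via Lemma \ref{interchange}.
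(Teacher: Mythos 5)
Your proposal is correct and takes essentially the same route as the paper, which simply records that part a.) and part c.) are immediate and that part b.) follows from Lemma \ref{interchange}; your write-up fills in exactly the details the paper omits, including the key monotonicity-plus-stabilisation observation needed to equate ``smallest $n$ with $\tor_{n}(G)=\tor_{\infty}(G)$'' with ``smallest $n$ with $\tor_{n}(G)=\tor_{n+1}(G)$''.
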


The following result describes the behavior of torsion in amalgamated products.

\begin{thm}[{\cite[Theorem 11.69]{Rot}}]\mbox{}\label{tor thm}
Let $g\in \tor(G)$. Then:
\\$1$. If $G=K_{1} *_{H}K_{2}$ is an amalgamated product, then $g$ is conjugate to an element of $K_{1}$ or $K_{2}$.
\\$2$. If $G=K*_{H}  $ is an HNN extension, then $g$ is conjugate to an element in the base group $K$.
\end{thm}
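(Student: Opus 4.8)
The plan is to reduce both statements to the normal form theory for amalgamated free products and HNN extensions, together with the elementary observation that a sufficiently ``long'' reduced element must have infinite order. Since conjugation preserves torsion, it suffices in each case to show that a torsion element is conjugate to an element whose reduced form has the asserted shape, i.e. to rule out the ``long'' cyclically reduced forms.

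For part 1, recall that every element of $G=K_{1}*_{H}K_{2}$ is conjugate to a \emph{cyclically reduced} element: either an element of $K_{1}\cup K_{2}$, or an alternating product $x_{1}x_{2}\cdots x_{n}$ with $n\geq 2$, consecutive syllables lying in distinct factors, no syllable lying in $H$, and — crucially — $x_{1}$ and $x_{n}$ lying in distinct factors. In the first case we are done. In the second case I would show $g$ has infinite order: by the normal form theorem the word obtained by concatenating $k$ copies of $x_{1}\cdots x_{n}$ is already reduced, since the hypothesis that $x_{n}$ and $x_{1}$ lie in different factors is exactly what prevents cancellation at each seam; hence $g^{k}$ has syllable length $kn\geq 2$, so $g^{k}\neq e$ for every $k\geq 1$. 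This contradicts $g\in\tor(G)$, so the long case cannot occur, and $g$ is conjugate into $K_{1}$ or $K_{2}$.

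For part 2 the argument runs in parallel, with Britton's Lemma replacing the amalgamated normal form. Writing $G=K*_{H}$ with stable letter $t$, every element is conjugate to a cyclically reduced expression $w=g_{0}t^{\varepsilon_{1}}g_{1}t^{\varepsilon_{2}}\cdots t^{\varepsilon_{n}}g_{n}$ with $g_{i}\in K$, $\varepsilon_{i}=\pm 1$, containing no pinch (no subword $t^{-1}g_{i}t$ or $tg_{i}t^{-1}$ with $g_{i}$ in the relevant associated subgroup) \emph{and} no pinch created upon reading cyclically from $g_{n}$ back to $g_{0}$. If $n=0$ then $w=g_{0}\in K$ and we are done. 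If $n\geq 1$, the cyclic-reduction hypothesis guarantees that $w^{k}$ is again pinch-free with exactly $kn\geq 1$ occurrences of $t^{\pm 1}$, so by Britton's Lemma $w^{k}\neq e$; thus $w$, hence $g$, has infinite order, again contradicting torsion. So $g$ is conjugate into the base group $K$.

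The main obstacle in both parts is the bookkeeping of the cyclic reduction: one must be careful that no cancellation (respectively, no pinch) appears at the seam when powers are formed, which is precisely what cyclic reducedness ensures. Once that is set up, infinitude of the order of a genuinely long cyclically reduced element is immediate, and the theorem follows — this being the content of the cited \cite[Theorem 11.69]{Rot}.
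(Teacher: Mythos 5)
Your proof is correct and follows exactly the standard route: the paper gives no proof of this statement, citing it directly from Rotman, and the cited proof is precisely your argument --- conjugate $g$ to a cyclically reduced form, then observe that a cyclically reduced element of syllable length at least $2$ (respectively, of positive $t$-length) has all powers reduced of proportionally growing length, hence of infinite order by the normal form theorem (respectively, Britton's Lemma). The only point worth being careful about, which you handle correctly, is that cyclic reducedness is exactly the condition needed to prevent cancellation or pinches at the seams when forming powers.
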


If $\{A_{i}\}_{i\in I}$ is a family of groups, write $*_{i\in I}A_{i}$ for the free product of all the $A_{i}$. 

\begin{cor}\label{tor cor}
Suppose $g \in \tor(*_{i\in I}A_{i})$, where $I$ is any index set. Then $g$ is conjugate to an element in one of the $A_{i}$'s.
\end{cor}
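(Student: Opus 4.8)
The plan is to reduce Corollary~\ref{tor cor} to the finitely-supported case and then apply Theorem~\ref{tor thm}(1). First I would observe that any element $g$ of the free product $*_{i\in I}A_{i}$ is, by definition of the free product, represented by a word involving only finitely many of the factors; say its underlying reduced word uses letters from $A_{i_{1}}, \ldots, A_{i_{k}}$. Then $g$ lies in the subgroup $K := A_{i_{1}} * \cdots * A_{i_{k}}$, which is a subgroup of $*_{i\in I}A_{i}$ in the natural way (the free product of a subfamily embeds in the free product of the whole family). Since the property of being torsion is inherited by subgroups, $g \in \tor(K)$.

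Next I would reduce the finite free product to the two-factor case by induction on $k$. For $k = 1$ there is nothing to prove. For $k \geq 2$, write $K = A_{i_{1}} * (A_{i_{2}} * \cdots * A_{i_{k}})$, which is an amalgamated product $K_{1} *_{H} K_{2}$ with $K_{1} = A_{i_{1}}$, $K_{2} = A_{i_{2}} * \cdots * A_{i_{k}}$, and $H$ the trivial group. By Theorem~\ref{tor thm}(1), $g$ is conjugate in $K$ to an element $g'$ of $K_{1} = A_{i_{1}}$ or to an element $g'$ of $K_{2}$. In the first case we are done. In the second case, $g'$ is a torsion element of $K_{2} = A_{i_{2}} * \cdots * A_{i_{k}}$, so by the induction hypothesis $g'$ is conjugate, within $K_{2}$, to an element of some $A_{i_{j}}$; composing the two conjugations (both of which take place inside $K \leq *_{i\in I}A_{i}$), we conclude that $g$ is conjugate in $*_{i\in I}A_{i}$ to an element of some $A_{i_{j}}$.

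I do not anticipate a serious obstacle here: the result is essentially a bookkeeping exercise layered on top of Theorem~\ref{tor thm}. The only point requiring a little care is the very first reduction — making precise that an arbitrary, possibly infinite, free product is the directed union of its finite sub-free-products and that each such finite sub-free-product sits inside it as a retract (hence as a subgroup), so that torsion is detected at finite level and conjugacy witnessed at finite level transports back up. Once that is in place, the induction is routine, and one could even bypass the induction entirely by noting that a finite free product $A_{i_1} * \cdots * A_{i_k}$ is an iterated HNN/amalgam over the trivial subgroup and applying Theorem~\ref{tor thm} repeatedly; I would present whichever is shorter in the write-up.
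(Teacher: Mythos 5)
Your argument is correct. Note that the paper does not actually write out a proof of corollary \ref{tor cor}: it only remarks that "the proof is very similar to that of theorem \ref{tor thm}", i.e.\ the intended route is to rerun the normal-form argument for free products directly on $*_{i\in I}A_{i}$ (every element is conjugate to a cyclically reduced one, and a cyclically reduced word of syllable length at least two has infinite order, so a torsion element must be conjugate into a single factor). You instead treat theorem \ref{tor thm} as a black box: first the finite-support reduction (every element lies in the sub-free-product $A_{i_1}*\cdots*A_{i_k}$ of the finitely many factors its reduced word involves, and torsion and conjugacy pass between this subgroup and the ambient group in the ways you need), then induction on $k$ via the splitting $A_{i_1}*(A_{i_2}*\cdots*A_{i_k})$, which is an amalgam over the trivial subgroup and hence within the scope of theorem \ref{tor thm}(1). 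Both routes are valid. Yours has the advantage of not reopening the proof of the cited theorem, at the cost of the (routine but necessary) bookkeeping that a sub-free-product of a subfamily embeds canonically and that conjugacy witnessed there transports up; the paper's implied route is more direct but requires one to actually know the internals of the normal-form argument rather than just the statement of theorem \ref{tor thm}. One point worth making explicit in a final write-up is that the theorem as quoted is being applied with $H=\{e\}$, which the statement of theorem \ref{tor thm} permits.
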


\begin{cor}\label{amalgams}
Let $A,B$ be groups, and $H$ a group which embeds into both $A$ and $B$. Then $\tor_{1}(A*_{H}B)=\llangle \tor(A) \cup \tor(B) \rrangle^{A*_{H}B}$ (where $\tor(A)$ and $\tor(B)$ are viewed as subsets of $A,B$ respectively, and hence as subsets of $A*_{H}B$ under the natural embeddings).
\end{cor}

We can extend the above proposition, if we restrict ourselves to free products \emph{without} amalgamation. 

\begin{prop}\label{tor through products}
Let $\{A_{i}\}_{i\in I}$ be a family of groups.  Then, for all ordinals $j\leq \omega$, $\tor_{j}(*_{i\in I}A_{i})= \llangle \cup_{i\in I} \tor_{j}(A_{i}) \rrangle^{*_{i\in I}A_{i}}$, and the natural map 
\[
*_{i\in I} (A_{i}/\tor_{j}(A_{i})) \rightarrow (*_{i\in I}A_{i})/\tor_{j}(*_{i\in I}A_{i})
\]
is an isomorphism.

\begin{proof}
Set $F:= *_{i\in I}A_{i}$. The fact that  $\tor_{1}(F)= \llangle \cup_{i\in I} \tor_{1}(A_{i}) \rrangle^{F}$ follows immediately from corollary \ref{tor cor}. Using this, it follows easily that the natural map 
\[
*_{i\in I} (A_{i}/\tor_{1}(A_{i}))\rightarrow (F/\tor_{1}(F))
\]
is an isomorphism. Using lemma \ref{normal}, and induction, we see that $\tor_{j}(F)= \llangle \cup_{i\in I} \tor_{j}(A_{i}) \rrangle^{F}$ for all $j\in \mathbb{N}$. The statement for $\tor_{\omega}$ now follows by taking unions.
The fact that the natural map 
\[
*_{i\in I}( A_{i}/\tor_{j}(A_{i}))\rightarrow (F/\tor_{j}(F))
\]
is an isomorphism is an immediate consequence. 
\end{proof}
\end{prop}

\begin{cor}\label{prod cor}
Let $\{A_{i}\}_{i\in I}$ be a family of groups. Then 
\[
\torlen(*_{i\in I} A_{i})= \sup\{\torlen(A_{i})\}_{i\in I}.
\]
\end{cor}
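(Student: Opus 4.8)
The plan is to reduce the statement about torsion length to the structural result of Proposition \ref{tor through products}, which already tells us that for each $j$ the quotient $(*_{i\in I}A_{i})/\tor_{j}(*_{i\in I}A_{i})$ is canonically isomorphic to the free product $*_{i\in I}\big(A_{i}/\tor_{j}(A_{i})\big)$. So the key observation is that a free product is torsion-free precisely when each of its factors is torsion-free: one direction is trivial (a subgroup of a torsion-free group is torsion-free), and the other is immediate from Corollary \ref{tor cor}, since any torsion element of $*_{i\in I}B_{i}$ is conjugate into some $B_{i}$, hence its existence forces some $B_{i}$ to contain torsion.

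The argument then runs as follows. Set $\ell := \sup\{\torlen(A_{i})\}_{i\in I}$. First I would handle the case $\ell = \infty$: then for every finite $j$ there is some index $i$ with $\torlen(A_{i}) > j$, so by Lemma \ref{sum lengths} c.) the group $A_{i}/\tor_{j}(A_{i})$ is not torsion-free; by Proposition \ref{tor through products} and the observation above, $(*_{i\in I}A_{i})/\tor_{j}(*_{i\in I}A_{i}) \cong *_{i\in I}\big(A_{i}/\tor_{j}(A_{i})\big)$ is therefore not torsion-free, so $\torlen(*_{i\in I}A_{i}) > j$ for all finite $j$, i.e.\ it is infinite. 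Now suppose $\ell < \infty$. For $j = \ell$, every $A_{i}/\tor_{\ell}(A_{i})$ is torsion-free (as $\torlen(A_{i}) \le \ell$), so $*_{i\in I}\big(A_{i}/\tor_{\ell}(A_{i})\big)$ is torsion-free, whence $(*_{i\in I}A_{i})/\tor_{\ell}(*_{i\in I}A_{i})$ is torsion-free and $\torlen(*_{i\in I}A_{i}) \le \ell$. Conversely, pick $i_{0}$ with $\torlen(A_{i_{0}}) = \ell$; then for $j = \ell - 1$ the factor $A_{i_{0}}/\tor_{\ell-1}(A_{i_{0}})$ has torsion, so again the free product quotient $(*_{i\in I}A_{i})/\tor_{\ell-1}(*_{i\in I}A_{i})$ has torsion, giving $\torlen(*_{i\in I}A_{i}) > \ell - 1$. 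Combining the two inequalities yields $\torlen(*_{i\in I}A_{i}) = \ell$.

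I do not anticipate a serious obstacle here: all the heavy lifting has already been done in Proposition \ref{tor through products}, and what remains is the elementary fact that torsion-freeness of a free product is detected factor-by-factor, together with the bookkeeping of the supremum (including the conventions $\infty - i = \infty$ from Lemma \ref{sum lengths}). The only point requiring a little care is making sure the $\sup$ is interpreted correctly when it is attained versus merely approached — but since torsion length takes values in $\mathbb{N}\cup\{\infty\}$, if the supremum is finite it is attained, which is exactly what the second part of the argument uses; and if it is not attained it equals $\infty$, which is the first case. I would present the proof in essentially the two-case form above, perhaps condensed, citing Proposition \ref{tor through products} and Lemma \ref{sum lengths} c.) as the two inputs.
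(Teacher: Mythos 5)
Your proof is correct and follows exactly the route the paper intends: the corollary is stated as an immediate consequence of Proposition \ref{tor through products} together with Lemma \ref{sum lengths} c.), and your two-case argument (with the observation that a free product is torsion-free iff each factor is, via Corollary \ref{tor cor}) is precisely the derivation the paper leaves to the reader. The only microscopic omission is the trivial case $\ell=0$, where the ``$j=\ell-1$'' step is vacuous.
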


\subsection{Embeddings} \label{embedsec}

The following result is well-known (see, for example, \cite[Corollary 11.72]{Rot}). 

\begin{lem}\label{2 gen tor}\label{2 gen finite}
There is a uniform procedure that, on input of any countably generated recursive presentation $P= \langle X|R \rangle$, outputs a $2$-generator recursive presentation $\tgen(P)$ such that $\overline{P}$ embeds in $\overline{\tgen(P)}$. Moreover, when $P$ is a finite presentation, $\tgen(P)$ is a finite presentation.
\end{lem}

\begin{proof}
Fix an enumeration $x_{1}, x_{2}, \ldots$ of all letters in $X$. 
Let $P_{1}:=\langle a,b | - \rangle$ be a presentation for the free group $F_{2}$. Consider the following two subgroups of $\overline{P}*\overline{P}_{1}$:
\[
 A:= \langle a, x_{1}b^{-1}ab, x_{2}b^{-2}ab^{2}, \ldots, x_{i}b^{-i}ab^{i}, \ldots \rangle 
\]
and
\[
 B:= \langle  b, a^{-1}ba, \ldots, a^{-i}ba^{i}, \ldots \rangle, 
\]
where in each case $i$ ranges over all values for which $x_{i}\in X$.
\\Note that the sets $\{ a^{-i}ba^{i} \}_{i \in \mathbb{N}}$ and $\{ b^{-i}ab^{i} \}_{i \in \mathbb{N}}$ 
freely generate copies of $F_{\omega}$ in $\overline{P}_{1}$. 
Thus, by the normal form theorem for free products, $\{ x_{i} b^{-i}ab^{i} \}_{i \in \mathbb{N}}$ 
freely generates a copy of $F_{\omega}$ in $\overline{P}*\overline{P}_{1}$. 
Thus $A$ and $B$ are isomorphic, and such an isomorphism can be given by the extension $\overline{\phi}$ of the set map $\phi(a):=b$, $\phi(x_{i} b^{-i}ab^{i}):=a^{-i}ba^{i}$ for all $i$ where $x_{i} \in X$. 
We now form the HNN extension $\overline{P}*_{\overline{\phi}}$ of $\overline{P}$, conjugating $A$ to $B$. 
This can be realised via the following presentation:
\[
Q:=\langle X, a, b, t | R,\ t^{-1}at=b,\ t^{-1}x_{i}b^{-i}ab^{i}t=a^{-i}ba^{i}\ \forall i \textnormal{ with } x_{i} \in X \rangle.
\]
It is not hard to see that $\overline{Q}$ is generated by $a$ and $t$. Removing $X$ and $b$ from the generating set of $Q$, and making the relevant substitutions in the relating set of $Q$ gives us our desired $2$-generator recursive presentation, which we denote by $\tgen(P)$; by construction it is then clear that $\overline{P}$ embeds in $\overline{\tgen(P)}$. Finally, if $P$ is a finite presentation, then $Q$ will be a finite presentation, and hence so will $\tgen(P)$.
\end{proof}

\begin{lem} \label{lemlem}
Let $P=\langle X|R \rangle$ be a countably generated recursive presentation. Let $S$ be a recursive enumeration of a subset of $X^{*}.$ Then $\tgen(\langle X|R \cup S \rangle)$ is the presentation $\tgen(\langle X|R \rangle)$ with $S$ adjoined to its relating set.
\end{lem}

\begin{proof}
The construction of $\tgen(\langle X|R \rangle)$ is completely uniform in the relating set $R$. Thus we can add relations either before or after the amalgamation step, and it does not change the final presentation.
\end{proof}

Of course, in the above result we need to be careful about the notion of the union of two recursive enumerations of elements, as a recursive enumeration.

\begin{cor}
Let $P=\langle X|R \rangle$ be a countably generated recursive presentation. Let $S$ be a recursive enumeration of a subset of $X^{*}.$ Then
\[
{\overline{\tgen(\langle X|R \cup S \rangle)} \cong \overline{\tgen(P)}/\llangle S \rrangle^{\overline{\tgen(P)}}}. 
\]
\end{cor}

\begin{cor}\label{preserved}
Let $P=\langle X|R \rangle$ be a countably generated recursive presentation. Take an enumeration $T_{i}$ of all elements of $X^{*}$ representing elements of $\tor_{i}(\overline {P})$ (lemma $\ref{enum Tor i}$).
Then
\[
 \overline{\tgen(\langle X|R \cup T_{i} \rangle)} \cong  \overline{\tgen(P)}/\tor_{i}(\overline{\tgen(P)}).
\]
\end{cor}

\begin{lem}\label{pres tor free}
Let $P$ be a countably generated recursive presentation. Then $\tor_{1}(\overline{\tgen(P)})= \llangle \tor({\overline{P}}) \rrangle^{\overline{\tgen(P)}}$.
\end{lem}

\begin{proof}
By theorem \ref{tor thm}, every torsion element in $\overline{\tgen(P)}$ is conjugate to a torsion element in $\overline{P}$. 
\end{proof}

\begin{thm}\label{3 gen torlen}
Let $P$ be a countably generated recursive presentation (respectively,  finite presentation). Then we can construct a $2$-generator recursive presentation (respectively,  finite presentation) $\tgen(P)$ as given in lemma $\ref{2 gen tor}$, uniformly in $P$, such that $\overline{P}$ embeds in $\overline{\tgen(P)}$, and $\torlen(\overline{\tgen(P)})=\torlen(\overline{P})$.
\end{thm}

\begin{proof}

The first part of the theorem is proved in lemma \ref{2 gen tor}. All that remains to be shown is that $\torlen(\overline{\tgen(P)})=\torlen(\overline{P})$. 
By corollary \ref{preserved}, for any $i \leq \omega$, we have that $\overline{\tgen(\langle X|R \cup T_{i} \rangle)} \cong \overline{\tgen(P)}/\tor_{i}(\overline{\tgen(P)})$ (where $T_{i}$ is an enumeration of all words in $X^{*}$ representing elements in $\tor_{i}(\overline {P})$, via lemma \ref{enum Tor i}). By lemma \ref{pres tor free}, $\overline{\tgen(\langle X|R \cup T_{i} \rangle)}$ is torsion-free if and only if $\overline{\langle X|R \cup T_{i} \rangle}$ is. Since $\overline{P}/\tor_{i}(\overline{P}) \cong \overline{\langle X|R \cup T_{i} \rangle}$, we get that $\overline{\tgen(P)}/\tor_{i}(\overline{\tgen(P)})$ is torsion-free if and only if $\overline{P}/\tor_{i}(\overline{P})$ is. The result now follows, using lemma \ref{sum lengths}.
\end{proof}

\section{Constructions} \label{groupswitharblen}

\subsection{Groups of arbitrary finite torsion length}

\begin{prop}[{\cite[Proposition 4.10]{Chiodo2}}]\label{first eg}
Given any $j,k,l>1$, we can define the finite presentation
\[ P_{j,k,l}:=\langle x,y,z | x^j=e,\ y^k=e,\ xy=z^l \rangle.\]
Then $\overline{P}_{j,k,l}/\llangle\tor(\overline{P}_{j,k,l})\rrangle^{\overline{P}_{j,k,l}}\cong C_{l}$ and $\torlen(\overline{P}_{j,k,l})=2$.
\end{prop}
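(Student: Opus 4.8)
The plan is to analyze the structure of $\overline{P}_{j,k,l}$ directly from the presentation $\langle x,y,z \mid x^j = e,\ y^k = e,\ xy = z^l\rangle$. First I would identify the group more concretely. The relation $xy = z^l$ lets us eliminate $z$ partially, or better, recognize the group as an amalgamated free product. Consider $A = \langle x,y \mid x^j = e,\ y^k = e\rangle \cong C_j * C_k$ and $B = \langle z, w \mid w = z^l\rangle \cong \mathbb{Z}$ (free on $z$, with $w := z^l$); then $\overline{P}_{j,k,l} \cong A *_H B$ where $H$ is infinite cyclic, embedded in $A$ as $\langle xy \rangle$ and in $B$ as $\langle w \rangle = \langle z^l\rangle$. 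The key point making this an honest amalgam is that $xy$ has infinite order in $C_j * C_k$, which holds since at least one of $j,k$ exceeds... actually since both are $>1$ we need to be slightly careful: if $j=k=2$ then $xy$ still has infinite order in $C_2 * C_2$ (this is the infinite dihedral group), so $\langle xy\rangle \cong \mathbb{Z}$ in all cases $j,k>1$.

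Next I would compute $\tor_1(\overline{P}_{j,k,l}) = \llangle \tor(\overline{P}_{j,k,l})\rrangle$. By Theorem \ref{tor thm}(1), every torsion element is conjugate into $A = C_j * C_k$ or into $B \cong \mathbb{Z}$; since $B$ is torsion-free, every torsion element is conjugate into $A$. By Proposition \ref{amalgams}, $\tor_1(\overline{P}_{j,k,l}) = \llangle \tor(A) \cup \tor(B)\rrangle = \llangle \tor(A)\rrangle$. Now $\tor(C_j * C_k)$ generates a subgroup whose normal closure in $C_j * C_k$ is all of $C_j * C_k$ (the generators $x, y$ are themselves torsion), so $\llangle \tor(A)\rrangle^{\overline{P}_{j,k,l}}$ is the normal closure of $x$ and $y$ in the whole group. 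Quotienting by this kills $x$ and $y$, hence kills $xy = z^l$, so in the quotient $z^l = e$ while $z$ survives freely otherwise; thus $\overline{P}_{j,k,l}/\tor_1 \cong \langle z \mid z^l = e\rangle \cong C_l$. I should double-check this quotient computation via the amalgam description: killing the image of $H = \langle xy\rangle$ in both factors gives $A/\llangle xy\rrangle *_{H/H} B/\llangle z^l\rrangle$, but $A/\llangle xy\rrangle^A$ is trivial (it's $C_j * C_k$ modulo the normal closure of $xy$... hmm, that is not obviously trivial), so I would instead argue directly at the level of presentations: adding $x = e, y = e$ to $P_{j,k,l}$ yields $\langle x,y,z \mid x,y, z^l, \ldots\rangle = \langle z \mid z^l\rangle = C_l$, which is the cleanest route and avoids any subtlety.

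Finally, since $C_l$ is a nontrivial finite group, Lemma \ref{sum lengths}(a) gives $\torlen(C_l) = 1$, hence by Lemma \ref{sum lengths}(b) (with $i = 1$) we get $\torlen(\overline{P}_{j,k,l}) = 1 + \torlen(\overline{P}_{j,k,l}/\tor_1(\overline{P}_{j,k,l})) = 1 + \torlen(C_l) = 2$, provided we first confirm $\torlen(\overline{P}_{j,k,l}) \geq 2$, i.e. that $\overline{P}_{j,k,l}$ is not itself torsion-free — but it obviously isn't, as $x \neq e$ has order dividing $j$ (and order exactly $j$ by the amalgam structure / Magnus embedding, though we only need it to be finite and the element nontrivial, which follows from $C_j * C_k$ embedding in the amalgam). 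The main obstacle is the bookkeeping around the degenerate case $j = k = 2$ and, more importantly, rigorously justifying that $x$ is a genuine nontrivial torsion element and that $xy$ has infinite order so the amalgamated product decomposition is valid; both follow from standard normal form theory for free products, and this is where I would be most careful, though none of it is deep.
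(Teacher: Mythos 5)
Your proposal is correct and follows essentially the same route as the paper: identify $\overline{P}_{j,k,l}$ as $(C_j*C_k)*_{\langle xy\rangle=\langle z^l\rangle}\mathbb{Z}$, apply Proposition \ref{amalgams} to see that $\tor_1$ is the normal closure of $x$ and $y$, compute the quotient at the level of presentations to get $C_l$, and conclude via Lemma \ref{sum lengths}. Your instinct to abandon the factor-wise amalgam quotient (which would indeed be wrong, since $(C_j*C_k)/\llangle xy\rrangle\cong C_{\gcd(j,k)}$ need not be trivial) in favour of adding $x=e,\ y=e$ directly to the presentation is exactly what the paper does.
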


\begin{proof}
As the value of the subscripts on $P_{j,k,l}$ is irrelevant for this argument, we suppress them. It is clear from the presentation $P$ that $\overline{P} \cong (C_{j}*C_{k})*_{\langle xy\rangle =\langle z^l\rangle}\mathbb{Z}$; the amalgamated product of $C_{j}*C_{k}$ and $\mathbb{Z}$ over infinite cyclic subgroups. By corollary \ref{amalgams}, $\tor_{1}(\overline{P})=\llangle \tor(C_{j}*C_{k}) \rrangle^{\overline{P}}$. Moreover, $x,y \in \tor(\overline{P})$, and so $\tor_{1}(\overline{P})=\llangle C_{j}*C_{k} \rrangle^{\overline{P}}$. It follows that $\overline{P}/\tor_{1}(\overline{P})$ has presentation $Q:=\langle x,y,z |x^j=e, \  y^k=e, \  xy=z^l, \  x=e, \  y=e \rangle$, thus $\overline{Q} \cong C_{l}$ and $\torlen(\overline{P})=2$. 
\end{proof}

\begin{defn}
Let $\{0,1\}^{n}$ denote the set of binary strings of length precisely $n$, where we define $\{0,1\}^{0}:=\{\emptyset\}$. If $\eta \in \{0,1\}^{n}$, then we write $\eta 0$ (respectively,   $\eta 1$) for the binary string of length $n+1$, given by appending $0$ (respectively,   $1$) to the rightmost end of $\eta$. Moreover, if $\eta \in \{0,1\}^{n}$, then we write $\eta'$ for the binary string of length $n-1$ given by removing the rightmost digit from $\eta$.
\end{defn}

We thank Claudia Pinzari; her questions led us to the following generalisation of proposition \ref{first eg}.

\begin{thm}\label{fp tor len}
There is a family of finite presentations $\{ P_{n}\}_{n \in \mathbb{N}}$ of groups satisfying $\torlen(\overline{P}_{n})=n$ and $\overline{P}_{n} /  \tor_{1}(\overline{P}_{n}) \cong \overline{P}_{n-1}$. Explicitly, these are:
\[
P_{n}:=\big\langle \ x_{\eta}\ \forall \eta \in \bigcup_{i=0}^{n-1} \{0,1\}^{i}\ \big|\ x_{\eta}^{3}=e\ \forall \eta \in \{0,1\}^{n-1},\ x_{\eta 0}x_{\eta 1}=x_{\eta}^{3}\ \forall \eta \in \bigcup_{i=0}^{n-2} \{0,1\}^{i}\ \big\rangle,
\]
which have $2^{n}-1$ generators, and $2^{n}-1$ relations.
\end{thm}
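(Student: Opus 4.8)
The plan is to induct on $n$, peeling off one ``layer'' of the binary tree at a time and identifying the quotient $\overline{P}_n/\tor_1(\overline{P}_n)$ with $\overline{P}_{n-1}$, exactly as in the proof of proposition \ref{first eg} but carried out uniformly across the whole tree. For the base case $n=1$ the presentation is $P_1 = \langle x_\emptyset \mid x_\emptyset^3 = e\rangle$, so $\overline{P}_1 \cong C_3$, which is a non-trivial finite group and hence has torsion length $1$ by lemma \ref{sum lengths}.

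For the inductive step, first I would give the structural description of $\overline{P}_n$ as an iterated free product with amalgamation. Writing $\eta$ for the leaves (strings in $\{0,1\}^{n-1}$), the generators $x_\eta$ with $\ord(x_\eta) = 3$ generate a free product of $2^{n-1}$ copies of $C_3$; each internal relator $x_{\eta 0} x_{\eta 1} = x_\eta^3$ glues in one new infinite cyclic factor $\langle x_\eta\rangle \cong \mathbb{Z}$ for $\eta$ of length $i \le n-2$, amalgamating the cyclic subgroup $\langle x_\eta^3\rangle$ of $\langle x_\eta\rangle$ with the cyclic subgroup $\langle x_{\eta 0} x_{\eta 1}\rangle$ generated in the already-built group by the product of its two children. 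One must check these cyclic subgroups are infinite so that each step is a genuine amalgamated product over $\mathbb{Z}$; this follows because at the stage the factor $\langle x_\eta\rangle = \mathbb{Z}$ is free and the element $x_{\eta0}x_{\eta1}$ has infinite order in the free product constructed so far (it is not conjugate into any vertex group of the relevant free product decomposition, using corollary \ref{tor cor} to rule out torsion, or more directly because the partial group is built from $\mathbb{Z}$'s and $C_3$'s amalgamated over $\mathbb{Z}$'s along a tree).

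Next, I would compute $\tor_1(\overline{P}_n)$. By theorem \ref{tor thm} (and its iteration, as in proposition \ref{amalgams} applied repeatedly along the tree), every torsion element of $\overline{P}_n$ is conjugate into one of the vertex groups; the $\mathbb{Z}$-factors contribute no torsion, so $\tor(\overline{P}_n)$ is the union of the conjugates of the leaf factors $\langle x_\eta\rangle \cong C_3$, $\eta \in \{0,1\}^{n-1}$. Hence $\tor_1(\overline{P}_n) = \llangle\, x_\eta \ \forall\, \eta \in \{0,1\}^{n-1}\,\rrangle^{\overline{P}_n}$, and killing these generators turns the relators $x_\eta^3 = e$ into triviality, turns $x_{\eta 0}x_{\eta 1} = x_\eta^3$ for $\eta$ of length $n-2$ into $x_\eta^3 = e$, and leaves the relators $x_{\eta 0} x_{\eta 1} = x_\eta^3$ for $\eta$ of length $\le n-3$ untouched. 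Reading off the surviving presentation gives exactly $P_{n-1}$ (with the tree re-indexed by truncation), so $\overline{P}_n/\tor_1(\overline{P}_n) \cong \overline{P}_{n-1}$. By lemma \ref{sum lengths}(b), $\torlen(\overline{P}_n) = 1 + \torlen(\overline{P}_{n-1}) = n$ provided $\tor_1(\overline{P}_n) \neq \{e\}$, which holds since $x_\emptyset$... (actually since each leaf $x_\eta$ has order $3$, so is a nontrivial torsion element). Finally the generator and relator counts follow from $\sum_{i=0}^{n-1} 2^i = 2^n - 1$, noting there are $2^{n-1}$ relators of the form $x_\eta^3 = e$ and $\sum_{i=0}^{n-2} 2^i = 2^{n-1} - 1$ of the form $x_{\eta 0}x_{\eta 1} = x_\eta^3$, totalling $2^n - 1$.

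The main obstacle I anticipate is the bookkeeping in the inductive step: one must verify carefully that the amalgamations along the tree are over genuinely infinite cyclic subgroups (so theorem \ref{tor thm} applies at each stage and no vertex group ``collapses too soon'' — the subtlety flagged in the note after proposition \ref{amalgams}), and that after quotienting by $\tor_1$ the presentation one reads off is literally $P_{n-1}$ under the truncation bijection $\eta \mapsto \eta$ of the index trees. Everything else is a routine transcription of the argument for proposition \ref{first eg}.
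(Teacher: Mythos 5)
Your proposal is correct and follows essentially the same route as the paper: both rest on the decomposition $\overline{P}_{n}\cong(\overline{P}_{n-1}*\overline{P}_{n-1})*_{\langle rs\rangle=\langle t^{3}\rangle}\mathbb{Z}$, use theorem \ref{tor thm}/proposition \ref{amalgams} to locate all torsion in the $C_{3}$ vertex groups (checking, as you do, that the amalgamated subgroups remain infinite cyclic so the splittings are genuine), and then induct via lemma \ref{sum lengths}. The only organizational difference is that you identify $\tor_{1}(\overline{P}_{n})$ directly as the normal closure of the leaf generators and read off $P_{n-1}$ by a Tietze computation on the presentation, whereas the paper identifies the quotient factor-by-factor using the inductive hypothesis $\overline{P}_{n-1}/\tor_{1}(\overline{P}_{n-1})\cong\overline{P}_{n-2}$; this is a cosmetic variation, not a different argument.
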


\begin{proof}
Note that $P_{0}:=\langle -|- \rangle$ and $P_{1}:=\langle x_{\emptyset} | x_{\emptyset}^{3}=e \rangle$. 
We have two copies of $\overline{P}_{n}$ sitting in $\overline{P}_{n+1}$, identifying $x_{\eta}$ in $\overline{P}_{n}$ with $x_{\eta 0}$ and $x_{\eta 1}$ respectively. Letting $r=x_{0}$, $s=x_{1}$, and $t=x_{\emptyset}$, it can then be seen that $\overline{P}_{n+1}\cong (\overline{P}_{n}*\overline{P}_{n})*_{\langle rs\rangle=\langle t^{3}\rangle} (\mathbb{Z})$. 
\\In proposition \ref{first eg} we showed that $\overline{P}_{2}/\tor_{1}(\overline{P}_{2}) \cong \overline{P}_{1}$. Now suppose $\overline{P}_{j}/\tor_{1}(\overline{P}_{j})$ $\cong \overline{P}_{j-1}$ for all $j \leq n$. Writing $\overline{P}_{n+1}\cong (\overline{P}_{n}*\overline{P}_{n})*_{\langle rs\rangle=\langle t^{3}\rangle} (\mathbb{Z})$, we see (by two applications of corollary \ref{amalgams}) that $\tor_{1}(\overline{P}_{n+1})=\llangle \tor(\overline{P}_{n}*\overline{P}_{n})\rrangle^{\overline{P}_{n+1}} =\llangle \tor(\overline{P}_{n}) \cup \tor(\overline{P}_{n})\rrangle^{\overline{P}_{n+1}}$ (the notation here is unfortunate; $\tor(\overline{P}_{n}) \cup \tor(\overline{P}_{n})$ denotes the union of the torsion elements of the two individual factors of $\overline{P}_{n}*\overline{P}_{n}$). Since $n\geq 1$, $r$ and $s$ remain non-trivial in their respective factors of $\overline{P}_{n}/\tor_{1}(\overline{P}_{n})$ and $\langle rs \rangle$ is still infinite cyclic, so the amalgamation is unaffected. By the inductive hypothesis, $\overline{P}_{n}/\tor_{1}(\overline{P}_{n}) \cong \overline{P}_{n-1}$, so we have
\begin{align*}
\overline{P}_{n+1}/\tor_{1}(\overline{P}_{n+1}) & = (\overline{P}_{n}*\overline{P}_{n})*_{\langle rs\rangle=\langle t^{3}\rangle} (\mathbb{Z})/ \llangle \tor(\overline{P}_{n}) \cup \tor(\overline{P}_{n})\rrangle^{\overline{P}_{n+1}}
\\ & \cong \big( (\overline{P}_{n}/\tor_{1}(\overline{P}_{n}))*(\overline{P}_{n}/\tor_{1}(\overline{P}_{n}))\big)*_{\langle rs\rangle=\langle t^{3}\rangle} (\mathbb{Z})
\\ & \cong (\overline{P}_{n-1}*\overline{P}_{n-1})*_{\langle rs\rangle=\langle t^{3}\rangle} (\mathbb{Z})
\\ & \cong \overline{P}_{n}
\end{align*}
which completes the inductive step.
\\Since lemma \ref{sum lengths} tells us that $\torlen(\overline{P}_{n+1})=\torlen(\overline{P}_{n})+1$, it follows that $\torlen(\overline{P}_{n})=n$. The number of generators and relations is self-evident.
\end{proof}

The recursive definition $\overline{P}_{n+1}:= (\overline{P}_{n}*\overline{P}_{n})*_{\langle rs\rangle=\langle t^{3}\rangle} (\mathbb{Z})$ first appeared (as far as we are aware) in \cite[Example 5.16]{Cirio et al} by Cirio \emph{et.~al.} as a generalisation of \cite[Proposition 4.10]{Chiodo2}. Our work is independent of that in \cite{Cirio et al}, but given how natural the extension is, it is unsurprising that the two constructions are the same.

We now show that $\overline{P}_{n}$ is word-hyperbolic for all $n$; we thank Jack Button for suggesting that they might be, and further suggesting the use of theorem \ref{conj sep thm}. 

\begin{thm}[Kharlampovich-Myasnikov, {\cite[Corollary 2]{KharMia}}]\label{conj sep thm}
Let $G_{1}, G_{2}$ be word-hyperbolic groups, and $A \leq G_{1}$, $B \leq G_{2}$ virtually
cyclic subgroups. Then the group $G_{1}*_{A=B}G_{2}$ is word-hyperbolic if and only if either $A$ is conjugate
separated in $G_{1}$ or $B$ is conjugate separated in $G_{2}$.
\end{thm}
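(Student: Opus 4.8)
\emph{Overall approach.} The statement is an ``iff'' characterising hyperbolicity of the amalgam $G := G_{1}*_{A=B}G_{2}$, and the plan is to prove the two implications by quite different geometric arguments, both organised around the action of $G$ on the Bass--Serre tree $T$ of the splitting and around the standard fact that a word-hyperbolic group contains no copy of $\mathbb{Z}^{2}$. If $A$ (equivalently $B$) is finite the result is the standard statement that an amalgam of hyperbolic groups over a finite subgroup is hyperbolic, and $A$ is then trivially conjugate separated; so the interesting case, which I treat throughout, is that $A$ is infinite virtually cyclic.

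\emph{Necessity (contrapositive).} I would assume $A$ is not conjugate separated in $G_{1}$ and $B$ is not conjugate separated in $G_{2}$, and produce a copy of $\mathbb{Z}^{2}$ in $G$. Since $A$ is virtually cyclic, an infinite subgroup has finite index, so non-separation yields $u\in G_{1}\setminus A$ with $A\cap uAu^{-1}$ of finite index in $A$. Let $\langle \zeta\rangle$ be the characteristic infinite cyclic finite-index subgroup of $A=B$. Intersecting, there is $m\geq 1$ with $\zeta^{m}\in A\cap uAu^{-1}$, whence $u^{-1}\zeta^{m}u\in A$, and a power of this element lies in $\langle \zeta\rangle$, giving a relation $u^{-1}\zeta^{p}u=\zeta^{q}$ inside the hyperbolic group $G_{1}$. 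Comparing stable translation lengths (a conjugacy invariant, positive on infinite-order elements) forces $|p|=|q|$, so after replacing $\zeta$ by a suitable power $u$ normalises $\langle \zeta\rangle$ with $u\zeta u^{-1}=\zeta^{\pm 1}$; symmetrically there is $v\in G_{2}\setminus B$ normalising $\langle \zeta\rangle$. The element $w:=uv^{-1}$ is cyclically reduced of syllable length $2$, hence acts loxodromically on $T$, while $\zeta$ is elliptic; a direct computation gives $w\zeta w^{-1}=\zeta^{\pm 1}$, so $\langle \zeta, w^{2}\rangle$ is free abelian of rank $2$ (the translation lengths on $T$ separate the elliptic $\zeta$ from the loxodromic $w^{2}$, so they cannot be commensurable). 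Thus $\mathbb{Z}^{2}\leq G$ and $G$ is not hyperbolic, which is the contrapositive of the desired implication.

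\emph{Sufficiency.} Here I would assume, without loss of generality, that $A$ is conjugate separated in $G_{1}$, show that the $G$-action on $T$ is acylindrical, and then invoke a combination theorem. Virtually cyclic subgroups of hyperbolic groups are quasiconvex, so the edge groups are quasi-isometrically embedded, as the combination machinery requires. For acylindricity, observe that any geodesic segment of $T$ of length $\geq 3$ must turn at a vertex in the $G_{1}$-orbit, where two distinct incident edges have stabilisers that are conjugate to $A$ and $gAg^{-1}$ with $g\notin A$; the pointwise stabiliser of the segment therefore lies in a conjugate of $A\cap gAg^{-1}$, which is finite by conjugate separation and of order bounded by the (uniformly bounded) torsion of the virtually cyclic group $A$. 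Hence the action is $k$-acylindrical with a uniform bound on the orders of short-segment stabilisers. Acylindricity guarantees the hallways-flare condition of Bestvina--Feighn, so their Combination Theorem applies to this graph of hyperbolic groups with quasi-isometrically embedded virtually cyclic edge groups and yields that $G$ is word-hyperbolic.

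\emph{Main obstacle.} The delicate half is sufficiency: converting the purely algebraic acylindricity bound into the metric flaring hypothesis of the combination theorem, and controlling uniformly the quasigeodesic behaviour of the edge-group cosets along hallways in $T$. By contrast, necessity is an essentially algebraic extraction of a flat, once one uses that hyperbolic groups are $\mathbb{Z}^{2}$-free and that stable translation length rigidifies the conjugation relations available inside a virtually cyclic edge group.
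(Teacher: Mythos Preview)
The paper does not give its own proof of this theorem: it is quoted verbatim as \cite[Corollary 2]{KharMia} and used as a black box to deduce Proposition~\ref{conjsepto}. There is therefore no in-paper argument to compare your proposal against.

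For what it is worth, your outline is along standard lines and is broadly how the Kharlampovich--Myasnikov result is understood today. The necessity direction via extracting a $\mathbb{Z}^{2}$ from a pair of non-separating conjugators is correct in spirit; just be careful to pass to a \emph{single} power of $\zeta$ normalised by both $u$ and $v$ before forming $w=uv^{-1}$. For sufficiency, the reduction ``conjugate separated $\Rightarrow$ $3$-acylindrical action on the Bass--Serre tree'' is clean, but the step ``acylindrical $\Rightarrow$ annuli flare'' is exactly the nontrivial content you flag; in the original paper this is handled by direct estimates rather than by invoking acylindricity as an off-the-shelf hypothesis, so if you want a self-contained proof you should either cite a result that converts acylindricity plus quasiconvex edge groups into the Bestvina--Feighn flaring condition, or reproduce the relevant estimate. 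Since the present paper only \emph{uses} the theorem and does not attempt to prove it, none of this affects anything here.
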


\begin{prop} \label{conjsepto}
The groups $\overline{P}_{n}$ constructed in theorem $\ref{fp tor len}$ are word-hyperbolic, for all $n\in \mathbb{N}$. As a consequence, for every $n\in \mathbb{N}$, there exists a finitely presented word-hyperbolic group of torsion length $n$.
\begin{proof}
We proceed by straightforward induction. Firstly, $P_{1}$ is word-hyperbolic, as it is finite. Now,  $\overline{P}_{n+1}\cong (\overline{P}_{n}*\overline{P}_{n})*_{\langle rs\rangle=\langle t^{3}\rangle} (\mathbb{Z})$, where the notation is as in theorem \ref{fp tor len}. As $\overline{P}_{n}$ has no elements of order $2$ (by theorem \ref{tor thm}), we see by lemma \ref{conjsep} that $\langle rs \rangle$ is  conjugate separated in $\overline{P}_{n}*\overline{P}_{n}$. Since $\langle rs \rangle$ and $\langle t^{3} \rangle$ are both cyclic, it follows by theorem \ref{conj sep thm} that $\overline{P}_{n+1}$ is word-hyperbolic. This completes the induction.
\end{proof}
\end{prop}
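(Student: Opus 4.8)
The plan is to prove Proposition~\ref{conjsepto} by induction on $n$, using the recursive description $\overline{P}_{n+1}\cong (\overline{P}_{n}*\overline{P}_{n})*_{\langle rs\rangle=\langle t^{3}\rangle}(\mathbb{Z})$ established in Theorem~\ref{fp tor len} together with the Kharlampovich--Myasnikov criterion (Theorem~\ref{conj sep thm}).

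For the base case, $\overline{P}_{1}\cong C_{3}$ is finite, hence word-hyperbolic. For the inductive step, assume $\overline{P}_{n}$ is word-hyperbolic. I first note that $\overline{P}_{n}*\overline{P}_{n}$ is word-hyperbolic, being a free product of two word-hyperbolic groups. The amalgamated subgroups $\langle rs\rangle$ and $\langle t^{3}\rangle$ are each infinite cyclic (for $\langle t^{3}\rangle$ this is clear since it sits inside a $\mathbb{Z}$ factor; for $\langle rs\rangle$ one uses that $r$ and $s$ are nontrivial elements lying in distinct free factors of $\overline{P}_{n}*\overline{P}_{n}$, so $rs$ has infinite order), hence in particular virtually cyclic, so Theorem~\ref{conj sep thm} applies once we check one of the two conjugate-separation conditions. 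The natural candidate is to show that $\langle rs\rangle$ is conjugate separated in $\overline{P}_{n}*\overline{P}_{n}$: by Lemma~\ref{conjsep}, this follows as soon as we know that not both $r$ and $s$ have order $2$ in their respective copies of $\overline{P}_{n}$.

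So the crux is verifying that $\overline{P}_{n}$ has no element of order $2$; in fact $r$ is the generator of the final $\mathbb{Z}$ factor used in building $\overline{P}_{n}$ one stage earlier, which has infinite order, but it is cleanest simply to prove $\overline{P}_{n}$ has no $2$-torsion at all. This is where Theorem~\ref{tor thm} (the Karrass--Solitar--type description of torsion in amalgams and HNN extensions) comes in: any torsion element of $\overline{P}_{n+1}\cong(\overline{P}_{n}*\overline{P}_{n})*_{\langle rs\rangle=\langle t^{3}\rangle}(\mathbb{Z})$ is conjugate into one of the factors $\overline{P}_{n}*\overline{P}_{n}$ or $\mathbb{Z}$; the $\mathbb{Z}$ factor is torsion-free, and by Corollary~\ref{tor cor} every torsion element of $\overline{P}_{n}*\overline{P}_{n}$ is conjugate into one of the two copies of $\overline{P}_{n}$. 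Hence, by induction, if no $\overline{P}_{n}$ has $2$-torsion then neither does $\overline{P}_{n+1}$; the base case $\overline{P}_{1}\cong C_{3}$ obviously has no element of order $2$. Feeding this back into Lemma~\ref{conjsep} gives that $\langle rs\rangle$ is conjugate separated in $\overline{P}_{n}*\overline{P}_{n}$, and then Theorem~\ref{conj sep thm} yields that $\overline{P}_{n+1}$ is word-hyperbolic, completing the induction.

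The main obstacle is essentially the interplay of the two inductions: one needs the "no $2$-torsion" property threaded through the argument to license the application of Lemma~\ref{conjsep}, and one must be a little careful that the amalgamating subgroup $\langle rs\rangle$ really is infinite cyclic (so that the hypotheses of Theorem~\ref{conj sep thm} on virtually cyclic edge groups are met) — this is immediate from $r,s$ lying in different free factors, but worth stating. Everything else is routine: free products of hyperbolic groups are hyperbolic, finite groups are hyperbolic, and cyclic groups are virtually cyclic. The final sentence of the proposition is then an immediate consequence, since $\torlen(\overline{P}_{n})=n$ was already shown in Theorem~\ref{fp tor len}.
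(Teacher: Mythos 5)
Your proof is correct and takes essentially the same route as the paper: induction via the decomposition $\overline{P}_{n+1}\cong(\overline{P}_{n}*\overline{P}_{n})*_{\langle rs\rangle=\langle t^{3}\rangle}(\mathbb{Z})$, conjugate separation of $\langle rs\rangle$ via Lemma \ref{conjsep}, and the Kharlampovich--Myasnikov criterion. You merely spell out more carefully the points the paper leaves implicit (the secondary induction showing $\overline{P}_{n}$ has no $2$-torsion via Theorem \ref{tor thm} and Corollary \ref{tor cor}, and the fact that $\langle rs\rangle$ is infinite cyclic), which is a welcome clarification but not a different argument.
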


We now provide another perspective on these matters, using the following construction of Leary and Nucinkis \cite{LearyNuc}. 

\begin{thm}[{\cite[\S5 Corollary 7]{LearyNuc}}]\label{LN construction}
Let $G$ be a finitely generated group. Then, there is a group $\tilde{G}$ and a surjection $\phi :\tilde{G} \to G$ such that $\ker(\phi)= \tor_{1}(\tilde{G})$. A presentation for $\tilde{G}$ can be formed from a presentation for $G$, with the use of $2$  more generators and at most $2$  more relations. Thus if $G$ is finitely presented,  $\tilde{G}$ can also be made to be finitely presented, and in a uniform algorithmic manner.
\end{thm}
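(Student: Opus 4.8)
The plan is to write down $\tilde{G}$ by an explicit presentation obtained from a presentation $\langle X\mid R\rangle$ of $G$, and then to identify $\tor_{1}(\tilde{G})$ with the kernel of the evident retraction onto $G$. Adjoin two new generators $a,b$ of finite order --- for concreteness impose $a^{2}=b^{3}=e$, so that $a$ and $b$ generate a quotient of $C_{2}*C_{3}$, a virtually free group whose torsion elements are entirely understood --- and replace each relator $r\in R$ by $r\,w_{r}$, where $w_{r}$ is an \emph{infinite-order} word in $a,b$ determined by a fixed recursive rule depending on the position of $r$ in $R$. This produces
\[
\tilde{G}:=\big\langle\ X,\ a,\ b\ \big|\ \{\,r\,w_{r}\ :\ r\in R\,\},\ a^{2},\ b^{3}\ \big\rangle ,
\]
a presentation with exactly two more generators and two more relators than $\langle X\mid R\rangle$, writable algorithmically from any finite or recursive presentation of $G$; so the finitely generated / finitely presented assertions come for free. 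Let $\phi\colon\tilde{G}\to G$ be the homomorphism with $x\mapsto x$ for $x\in X$ and $a,b\mapsto e$.

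The routine part is to check $\phi$ and the easy inclusion. Every defining relator of $\tilde{G}$ maps under $\phi$ to a relator of $G$ or to $e$, and $X$ generates $G$, so $\phi$ is a well-defined surjection; conversely, putting $a=b=e$ in the presentation of $\tilde{G}$ deletes every $w_{r}$ and recovers $\langle X\mid R\rangle$ verbatim, so $\ker\phi=\llangle a,b\rrangle^{\tilde{G}}$. Since $a$ and $b$ have finite order in $\tilde{G}$, Lemma \ref{normal} gives $\ker\phi=\llangle a,b\rrangle^{\tilde{G}}\leqslant\tor_{1}(\tilde{G})$. Everything then reduces to the reverse inclusion $\tor_{1}(\tilde{G})\leqslant\ker\phi$, i.e.\ to showing that every torsion element of $\tilde{G}$ is killed by $\phi$; combined with the previous inclusion this gives $\tor_1(\tilde G)=\ker\phi$, hence $\tilde{G}/\tor_{1}(\tilde{G})\cong G$ via $\phi$.

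The whole reason for the modification $r\mapsto r\,w_{r}$ is to make this reverse inclusion true. Left alone, a relator such as $r=x^{n}$ would make $x$ torsion in $\tilde{G}$, which is fatal since $\phi(x)=x$ can be a nontrivial torsion element of $G$; replacing $r=e$ by $r=w_{r}^{-1}$ with $w_{r}^{-1}$ of infinite order instead ``spends'' that torsion on identifying $r$ with an infinite-order element. With the $w_{r}$ chosen suitably, $\tilde{G}$ should decompose as an amalgamated free product --- more generally, as the fundamental group of a finite graph of groups --- in which all of the torsion sits inside a single vertex group that is a quotient of $C_{2}*C_{3}$, on which $\phi$ is identically trivial, while the rest of the decomposition is torsion-free. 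Theorem \ref{tor thm} and Corollary \ref{tor cor} then force every torsion element of $\tilde{G}$ to be conjugate into that vertex group, hence into $\ker\phi$; so $\tor(\tilde{G})\subseteq\ker\phi$, and therefore $\tor_{1}(\tilde{G})=\langle\tor(\tilde{G})\rangle^{\tilde{G}}\leqslant\ker\phi$, which finishes the argument.

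The hard part --- and the step deserving the most care --- is choosing the words $w_{r}$ and organising the bookkeeping so that the claimed graph-of-groups decomposition genuinely holds: one must guarantee that the amalgamating edge subgroup (generated by a suitable infinite-order word in $a,b$) stays infinite cyclic, and that distinct modified relators cannot interact so as to force the would-be torsion-free part to acquire torsion. In particular the naive rule ``$r_{i}\mapsto r_{i}(ab)^{-i}$'' is \emph{not} adequate --- redundancies among the relators of $G$ (for instance two relators whose product is a third) can collapse the infinite-order element and reintroduce torsion outside the kernel --- so a more robust choice of the $w_{r}$, or a preliminary normalisation of the presentation of $G$, is needed. It is precisely here that one must lean on the structural theorems for torsion in amalgams and HNN extensions rather than on any direct combinatorial manipulation of words.
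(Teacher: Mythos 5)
Your architecture is the right one and matches the Leary--Nucinkis construction as the paper presents it: adjoin a copy of $C_{2}*C_{3}$, tie the relators of $G$ to infinite-order elements of it, and use theorem \ref{tor thm} to force all torsion of $\tilde{G}$ to be conjugate into that factor, hence into $\ker\phi$. But the step you explicitly defer --- ``choosing the $w_{r}$ \ldots so that the claimed graph-of-groups decomposition genuinely holds'' --- is not a technicality to be organised later; it is the entire content of the proof, and as written your plan does not contain the idea that makes it work. If you simply append words $w_{r}$ in $a,b$ to the given relators $r\in R$, then any relation holding among the $r$'s inside the free group $F(X)$ (and such relations are unavoidable in general, since the $r$'s need not freely generate their subgroup) transfers to a relation among the $w_{r}$'s in the image of $C_{2}*C_{3}$, so the map you would like to be an isomorphism of edge groups is not even well defined, the group is not an amalgam of the two visible factors, and theorem \ref{tor thm} has nothing to bite on. You correctly diagnose this failure mode but do not repair it.

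The repair, which is what the paper actually does, is the following normalisation: writing $G=F_{k}/N$ with $N=\llangle R\rrangle^{F_{k}}$, pass from $R$ to a \emph{free basis} $\{t_{1},\dots,t_{r}\}$ of the (non-normal) subgroup $\langle R\rangle^{F_{k}}$, which is free of some rank $r\leq|R|$ by Nielsen--Schreier and is computable via Stallings foldings; note that $\llangle t_{1},\dots,t_{r}\rrangle^{F_{k}}=\llangle R\rrangle^{F_{k}}=N$, so the $t_{i}$ serve as relators for the same group $G$. Inside $C_{2}*C_{3}=\overline{\langle x,y\mid x^{2},y^{3}\rangle}$ the elements $a:=yxy$, $b:=xyxyx$ freely generate an $F_{2}$ (lemma \ref{freepfrees}), so $\{b^{-i}ab^{i}\}_{i=1}^{r}$ freely generates an $F_{r}$. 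Now $t_{i}\mapsto b^{-i}ab^{i}$ really is an isomorphism between two free subgroups of the torsion-free group $F_{k}$ and of $C_{2}*C_{3}$ respectively, so $\tilde{G}:=F_{k}*_{\langle t_{i}\rangle=\langle b^{-i}ab^{i}\rangle}(C_{2}*C_{3})$ is an honest amalgamated product containing both factors; theorem \ref{tor thm} places every torsion element in a conjugate of $C_{2}*C_{3}$, whence $\tor_{1}(\tilde{G})=\llangle x,y\rrangle^{\tilde{G}}$, and killing $x,y$ kills the $b^{-i}ab^{i}$, hence the $t_{i}$, hence $N$, leaving exactly $G$. The resulting presentation $\langle z_{1},\dots,z_{k},x,y\mid x^{2},y^{3},\ t_{i}=b^{-i}ab^{i}\ (1\leq i\leq r)\rangle$ has two extra generators and, since $r\leq|R|$, at most two extra relators. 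Without this passage to a free basis of $\langle R\rangle^{F_{k}}$ your argument cannot be completed, so the proposal as it stands has a genuine gap.
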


\begin{proof}
Let $\psi: F_{k} \to G$ be a surjection from a free group (of minimal rank) to $G$, with kernel $N$. So $N=\llangle R \rrangle^{F_{k}}$ for some set $R \subset F_{k}$ (which can be taken finite if $G$ is finitely presented). Hence $\langle R \rangle$ is a free group. Let $r:=\rank (\langle R \rangle)$ ($r= \omega$ if $G$ is not finitely presented; if $G$ is finitely presented we can effectively compute $r:= \rank(\langle R \rangle)$ and a free generating set of $\langle R \rangle$ using Stallings foldings, see (\cite{KapMia})). The group $C_{2}*C_{3}:=\overline{\langle x,y|x^2,y^3\rangle}$ contains an embedded copy of $F_{2}$, freely generated by $a:=yxy$ and $b:=xyxyx$ (lemma \ref{freepfrees}). Thus the subgroup of $C_{2}*C_{3}$ generated by $S_{\omega}:=\{b^{-1}ab, \ldots, b^{-n}ab^{n}, \ldots\}$ freely generates an embedded copy of $F_{\omega}$. Let  $S_{l}:=\{b^{-1}ab, \ldots, b^{-l}ab^{l}\}$. Now form the free product with  amalgamation
\[
\tilde{G}:=F_{k}*_{\phi}(C_{2}*C_{3})
\]
where $\phi$ identifies $\langle R \rangle$ and $\langle S_{r} \rangle$. Note that $\tor_{1}(\tilde{G})= \llangle x,y \rrangle^{\tilde{G}}$, so annihilating the torsion of $\tilde{G}$ leaves us exactly with $G$ (annihilating $C_{2}*C_{3}$ means we annihilate $S_{r}$, and hence $R$, and hence the normal closure of $R$, which is $N$).
It is easy to see that $\tilde{G}$ can be presented with $2+k$ generators and $2+r$ relations. 
\end{proof}

\begin{cor}\label{other version}
There is a sequence of finitely presented groups $\{G_{n}\}_{n \in \mathbb{N}}$ such that, for each $n$, $\torlen(G_{n})=n$ and $G_{n}\tor_{1}/(G_{n}) \cong G_{n-1}$. Moreover, each $G_{n}$ has a finite presentation with $2n-1$ generators and at most $2n-1$ relations.
\end{cor}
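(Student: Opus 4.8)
The plan is to iterate the construction of Theorem \ref{LN construction}, letting Lemma \ref{sum lengths} do the bookkeeping of torsion lengths. First I would fix the base case: take $G_{0}$ to be the trivial group and $G_{1}:=C_{3}$ with presentation $Q_{1}:=\langle x_{1}\mid x_{1}^{3}=e\rangle$, so that $Q_{1}$ has $2\cdot 1-1=1$ generator and one relator, $\torlen(G_{1})=1$ by Lemma \ref{sum lengths} a.), and trivially $G_{1}/\tor_{1}(G_{1})\cong G_{0}$. Then, recursively, given $G_{n-1}$ together with a finite presentation $Q_{n-1}$ having $2(n-1)-1$ generators and at most that many relators, I would set $G_{n}:=\widetilde{G_{n-1}}$ and let $Q_{n}$ be the presentation furnished by Theorem \ref{LN construction}; by that theorem $Q_{n}$ is $Q_{n-1}$ with exactly two extra generators and at most two extra relators adjoined, giving the claimed count of $2n-1$ generators and at most $2n-1$ relators, together with $G_{n}/\tor_{1}(G_{n})\cong G_{n-1}$.

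The remaining point is the identity $\torlen(G_{n})=n$, which I would prove by induction on $n$, the case $n=1$ being the base case above. For the inductive step, the key observation is that $\tor_{1}(G_{n})$ is non-trivial: in the construction of Theorem \ref{LN construction} one has $G_{n}=F_{k}*_{\phi}(C_{2}*C_{3})$ and $\tor_{1}(G_{n})=\llangle x,y\rrangle^{G_{n}}$, and since the factor $C_{2}*C_{3}$ embeds into the amalgamated free product, $x$ has order exactly $2$, so $G_{n}$ is not torsion-free and $\torlen(G_{n})\geq 1$. Hence Lemma \ref{sum lengths} b.) applies with $i=1$, giving $\torlen(G_{n-1})=\torlen\big(G_{n}/\tor_{1}(G_{n})\big)=\torlen(G_{n})-1$; combining this with the inductive hypothesis $\torlen(G_{n-1})=n-1$ yields $\torlen(G_{n})=n$, completing the induction.

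Since the finitely presented refinement of Theorem \ref{LN construction} is already in hand, there is no serious obstacle here; the only point to be careful about is precisely the non-triviality of $\tor_{1}(G_{n})$ at each stage, for without it one could not invoke Lemma \ref{sum lengths} b.) to conclude that each application of the construction raises torsion length by exactly one rather than leaving it unchanged. I note also that one could instead apply $G\mapsto\widetilde{G}$ directly to the trivial group, but then the presentation produced for $G_{1}$ is that of $C_{2}*C_{3}$, which uses one generator too many; starting from the explicit presentation $\langle x_{1}\mid x_{1}^{3}\rangle$ of $C_{3}$ is what gives the sharp count $2n-1$.
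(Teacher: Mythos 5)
Your proposal is correct and follows essentially the same route as the paper: start from a one-generator, one-relator torsion base case and iterate the construction $G\mapsto\widetilde{G}$ of theorem \ref{LN construction}, using lemma \ref{sum lengths} to increment the torsion length (the paper seeds the induction with $C_{2}$ rather than $C_{3}$, which is immaterial). Your explicit check that $\tor_{1}(G_{n})$ is non-trivial at each stage — needed to invoke lemma \ref{sum lengths} b.) — is a detail the paper leaves implicit, and it is the right point to be careful about.
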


\begin{proof}
Set $G_{1}:= C_{2}$ with finite presentation $\langle z|z^2 \rangle$, and inductively define $G_{n+1}:= \tilde{G_{n}}$ for each $n>1$. The result then follows from theorem \ref{LN construction} and its proof.
\end{proof}

It is unclear whether these groups are word-hyperbolic.

\begin{quesb}
Is there some finite bound $k$ such that, for each $n \in \mathbb{N}$, there is a finite presentation with at most $k$ generators and $k$ relations of a group with torsion length $n$?
\end{quesb}

\subsection{A $2$-generator group with infinite torsion length} \label{mainres}

\begin{lem}\label{non-hopf inf}
Take the finite presentations $P_{0}, P_{1}, \ldots$ from theorem $\ref{fp tor len}$. Form their free product presentation $P := P_{0}*P_{1}*\ldots$. Then
\[
\overline{P}/\tor_{1}(\overline{P}) \cong \overline{P}.
\]
\end{lem}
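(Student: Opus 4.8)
The plan is to compute $\tor_{1}(\overline{P})$ explicitly using Proposition \ref{tor through products}, which tells us that for a free product $\overline{P} = *_{n} \overline{P}_{n}$ we have $\tor_{1}(\overline{P}) = \llangle \bigcup_{n} \tor_{1}(\overline{P}_{n}) \rrangle^{\overline{P}}$, and moreover the natural map
\[
*_{n \in \mathbb{N}} \big( \overline{P}_{n}/\tor_{1}(\overline{P}_{n}) \big) \longrightarrow \big( *_{n \in \mathbb{N}} \overline{P}_{n} \big)/\tor_{1}\big( *_{n \in \mathbb{N}} \overline{P}_{n} \big)
\]
is an isomorphism. So $\overline{P}/\tor_{1}(\overline{P}) \cong *_{n \in \mathbb{N}} \big( \overline{P}_{n}/\tor_{1}(\overline{P}_{n}) \big)$.

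Next I would invoke Theorem \ref{fp tor len}, specifically the relation $\overline{P}_{n}/\tor_{1}(\overline{P}_{n}) \cong \overline{P}_{n-1}$ for all $n \geq 1$, together with the fact that $\overline{P}_{0} = \overline{\langle - | - \rangle}$ is the trivial group, so $\overline{P}_{0}/\tor_{1}(\overline{P}_{0}) \cong \overline{P}_{0}$ as well (it is trivial). Substituting, we get
\[
\overline{P}/\tor_{1}(\overline{P}) \cong *_{n \in \mathbb{N}} \big( \overline{P}_{n}/\tor_{1}(\overline{P}_{n}) \big) \cong \overline{P}_{0} * \overline{P}_{0} * \overline{P}_{1} * \overline{P}_{2} * \cdots \cong \overline{P}_{0} * \overline{P}_{1} * \overline{P}_{2} * \cdots = \overline{P},
\]
where the penultimate isomorphism absorbs the extra trivial factor $\overline{P}_{0}$ (free product with the trivial group changes nothing), and re-indexes. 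This gives the claimed isomorphism $\overline{P}/\tor_{1}(\overline{P}) \cong \overline{P}$.

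The only genuinely delicate point is the bookkeeping in the re-indexing: one must be careful that $\mathbb{N}$ here includes $0$, that the index shift $n \mapsto n-1$ produces exactly the sequence $\overline{P}_{0}, \overline{P}_{0}, \overline{P}_{1}, \overline{P}_{2}, \ldots$ (with $\overline{P}_{0}$ appearing from both $n=0$ and $n=1$), and that deleting one trivial free factor yields a group isomorphic to the original $\overline{P}$. None of this is hard, but it should be spelled out since the statement is precisely an isomorphism of specific groups rather than an abstract equivalence. I do not expect any real obstacle here; the substantive work was already done in Proposition \ref{tor through products} and Theorem \ref{fp tor len}.
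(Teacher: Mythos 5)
Your proposal is correct and follows essentially the same route as the paper: apply Proposition \ref{tor through products} to pass the torsion quotient through the free product, then use $\overline{P}_{n}/\tor_{1}(\overline{P}_{n})\cong\overline{P}_{n-1}$ from Theorem \ref{fp tor len} and absorb the resulting trivial free factor. The only cosmetic difference is that the paper silently drops the trivial factor $\overline{P}_{0}$ at the outset and writes $\overline{P}=\overline{P}_{1}*\overline{P}_{2}*\cdots$, whereas you carry it along and handle the re-indexing explicitly; both are fine.
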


\begin{proof}
Using proposition \ref{tor through products} and theorem \ref{fp tor len} we get that
\begin{align*}
\overline{P}/\tor_{1}(\overline{P}) & = (\overline{P}_{1}*\overline{P}_{2}*\ldots ) / \tor_{1}(\overline{P}_{1}*\overline{P}_{2}*\ldots)
\\ & \cong (\overline{P}_{1}/\tor_{1}(\overline{P_{1}}))*(\overline{P}_{2}/\tor_{1}(\overline{P_{2}}))* \ldots
\\ & \cong \{e\}*\overline{P}_{1}*\overline{P}_{2}*\ldots
\\ & \cong \overline{P}. \qedhere
\end{align*}
\end{proof}

\begin{cor}\label{inf tor len}
With $P$ as above, $\torlen(\overline{P})= \omega$.
\end{cor}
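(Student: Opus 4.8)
The plan is to read off the claim from Lemma \ref{non-hopf inf} together with the behaviour of torsion length under torsion quotients recorded in Lemma \ref{sum lengths}. First I would note that $\torlen(\overline{P}) \geq 1$: the factor $\overline{P}_{1} \cong C_{3}$ embeds into $\overline{P}$, so $\overline{P}$ is not torsion-free, and hence $\torlen(\overline{P}) \neq 0$ by Lemma \ref{sum lengths} c) (equivalently, $\tor_{1}(\overline{P}) \neq \{e\}$). This step is needed only to make sure the subsequent application of Lemma \ref{sum lengths} b) is legitimate.

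Next I would argue by contradiction. Suppose $\torlen(\overline{P}) = n$ for some finite $n$; by the previous paragraph $n \geq 1$, so Lemma \ref{sum lengths} b) applies with $i = 1$ and gives $\torlen\big(\overline{P}/\tor_{1}(\overline{P})\big) = n - 1$. But Lemma \ref{non-hopf inf} asserts $\overline{P}/\tor_{1}(\overline{P}) \cong \overline{P}$, and torsion length is an isomorphism invariant, so $n = \torlen(\overline{P}) = \torlen\big(\overline{P}/\tor_{1}(\overline{P})\big) = n - 1$, which is absurd. Hence no finite $n$ works, i.e. $\torlen(\overline{P}) = \infty$.

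I should remark that one can also bypass Lemma \ref{non-hopf inf} entirely: since $P = P_{0} * P_{1} * \cdots$ is a free product presentation, Corollary \ref{prod cor} gives $\torlen(\overline{P}) = \sup_{n \in \mathbb{N}} \torlen(\overline{P}_{n})$, and $\torlen(\overline{P}_{n}) = n$ by Theorem \ref{fp tor len}, so the supremum is $\infty$. I would present the non-Hopfian argument as the main proof, since it is the one foreshadowed by the surrounding discussion, and mention the free-product computation as an alternative. There is no genuine obstacle here — the only care required is the bookkeeping noted above (ruling out $\torlen(\overline{P}) = 0$ and recalling that torsion length depends only on the isomorphism type), so the "hard part" is essentially trivial once Lemmas \ref{non-hopf inf} and \ref{sum lengths} are in hand.
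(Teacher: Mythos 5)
Your proof is correct and follows essentially the same route as the paper, which treats the corollary as immediate from Lemma \ref{non-hopf inf} together with the non-triviality of $\tor_{1}(\overline{P})$; your contradiction via Lemma \ref{sum lengths} b) is just the careful spelling-out of that step. The alternative you mention via Corollary \ref{prod cor} is also valid but is not the argument the paper has in mind.
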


\begin{thm}\label{fp inf torlen}
There exists a $2$-generator recursive presentation $Q$ for which $\torlen(\overline{Q}) = \omega$.
\end{thm}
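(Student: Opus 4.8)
The plan is to combine the infinite-torsion-length group constructed in Corollary \ref{inf tor len} with the embedding machinery of Theorem \ref{3 gen torlen}. Recall that in Lemma \ref{non-hopf inf} we formed $P := P_{0}*P_{1}*\ldots$, the free product presentation of the family $\{P_{n}\}$ from Theorem \ref{fp tor len}, and showed $\overline{P}/\tor_{1}(\overline{P}) \cong \overline{P}$, whence $\torlen(\overline{P}) = \infty$ by Corollary \ref{inf tor len}. The first thing I would check is that $P$, as written, is (or can be arranged to be) a countably generated recursive presentation: the generating set is the disjoint union $\bigcup_{n} \{x_{\eta} : \eta \in \bigcup_{i=0}^{n-1}\{0,1\}^{i}\}$ over all $n$, which is recursively enumerable (the indexing by finite binary strings paired with a level $n$ is entirely effective), and the relator set, being a recursive union of the finite relator sets of each $P_{n}$, is likewise recursively enumerable. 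So $P$ is a legitimate countably generated recursive presentation.

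Next I would simply apply Theorem \ref{3 gen torlen} to $P$: this produces, uniformly in $P$, a $2$-generator recursive presentation $\tgen(P)$ with $\overline{P} \hookrightarrow \overline{\tgen(P)}$ and, crucially, $\torlen(\overline{\tgen(P)}) = \torlen(\overline{P})$. Setting $Q := \tgen(P)$, we get a $2$-generator recursive presentation with $\torlen(\overline{Q}) = \torlen(\overline{P}) = \infty$, which is exactly the statement. Since Theorem \ref{3 gen torlen} is already stated and available in the excerpt, essentially all the work has been front-loaded into that theorem and into Lemma \ref{non-hopf inf}; the proof of Theorem \ref{fp inf torlen} itself is just the act of assembling these two pieces, together with the small verification that $P$ is recursive.

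There is no real obstacle remaining at this stage — the conceptual content lives in Theorem \ref{3 gen torlen} (that the $\tgen$ construction is torsion-length-neutral, which rests on Corollary \ref{preserved} and Lemma \ref{pres tor free}) and in Corollary \ref{inf tor len}. If I were being careful I would spell out the one point that could conceivably trip someone up: that ``$R$ is a recursive enumeration of relators'' for the free product presentation $P$ requires choosing an effective enumeration of $\bigcup_{n}R_{n}$ (e.g.\ interleaving), and that this choice is immaterial since $\tgen$ and $\torlen$ both depend only on the underlying group. So the write-up is short: note $P$ is a countably generated recursive presentation, set $Q := \tgen(P)$, and invoke Theorem \ref{3 gen torlen} and Corollary \ref{inf tor len} to conclude $\torlen(\overline{Q}) = \infty$.
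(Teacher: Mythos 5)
Your proposal is correct and follows exactly the paper's own argument: take the free product presentation $P$ from Lemma \ref{non-hopf inf} (with $\torlen(\overline{P})=\infty$ by Corollary \ref{inf tor len}), set $Q:=\tgen(P)$, and invoke Theorem \ref{3 gen torlen}. Your additional remark verifying that $P$ is a genuine countably generated recursive presentation is a sensible (if routine) point that the paper leaves implicit.
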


\begin{proof}
Take the countably generated recursive presentation $P$ from lemma \ref{non-hopf inf}, for which $\torlen(\overline{P})= \omega$ by corollary \ref{inf tor len}. But theorem \ref{3 gen torlen} gives that $\torlen(\overline{\tgen(P)})=\torlen(\overline{P})$ ($= \omega$). So taking $Q:=\tgen(P)$ gives a $2$-generator recursive presentation of a group $\overline{Q}$ with $\torlen(\overline{Q})=\omega$.
\end{proof}

\begin{quesb}
Does there exist a finitely presented group of infinite torsion length?
\end{quesb}

More ambitiously, is there a finitely presented (or even finitely generated) group $G$ with torsion for which $G/\tor_{1}(G) \cong G$? 

\subsection{Torsion length for solvable groups} \label{solsec}

\begin{prop}
Let $G$ be a group for which $\tor(G)$ forms a subgroup. Then $\torlen(G) \leq 1$.

\begin{proof}
Since $\tor(G)$ is a subgroup, it is immediate that $\tor(G)=\tor_{1}(G)$ (by lemma \ref{normal}). Suppose $x\tor_{1}(G)$ is a torsion element of $G/\tor_{1}(G)$. Then, there exists an $n$ such that $x^{n}\in \tor_{1}(G)$. However, since $\tor_{1}(G)=\tor(G)$, it follows that there exists an $m$ such that $x^{mn}=e$. Thus $x$ is torsion in $G$, and we have that $G/\tor_{1}(G)$ is torsion-free. Thus $\torlen(G)\leq 1$.
\end{proof}
\end{prop}

The set of torsion elements in a nilpotent group form a subgroup (\cite[5.2.7]{Rob}). Recall the family of presentations $P_{j,k,l}$ constructed in proposition \ref{first eg}.

\begin{prop}\label{solvable eg}
The group $\overline{P}_{j,k,l}$ is solvable if and only if $j=k=l=2.$ Moreover, $\overline{P}_{2,2,2}$ is polycyclic. 
\begin{proof}
Since $C_{j}*C_{k}$ embeds into $\overline{P}_{j,k,l}$, it follows from lemma \ref{freepfrees} that $\overline{P}_{j,k,l}$ is not solvable if either $j$ or $k$ is not $2$. Suppose $j=k=2$. Then, it is not hard to see that $\overline{P}_{2,2,l}$ surjects onto $C_{2}*C_{l}$ (to see this, introduce the relation $z^{l}=e$ in to the presentation $P_{2,2,l}$). Again, lemma \ref{freepfrees} tells us that $l$ must be $2$ if $\overline{P}_{2,2,l}$ were to be solvable.
\\We now show $\overline{P}_{2,2,2}$ is polycyclic. It follows from the presentation $P_{2,2,2}$ that the subgroup generated by $z^{2}$ is normal. Quotienting out by this subgroup, we get $C_{2}*C_{2}$, which is polycyclic by lemma \ref{c2sol}. Thus $\overline{P}_{2,2,2}$ is polycyclic. 
\end{proof}
\end{prop}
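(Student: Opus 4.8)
The plan is to prove the biconditional by its two implications and then establish the ``moreover'', which in fact gives the harder (``if'') direction for free.

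For the forward direction---solvability forces $j=k=l=2$---I would first use that $C_j*C_k$ embeds in $\overline{P}_{j,k,l}$: from the amalgam decomposition $\overline{P}_{j,k,l}\cong(C_j*C_k)*_{\langle xy\rangle=\langle z^l\rangle}\mathbb{Z}$ noted in proposition \ref{first eg}, the free factor $C_j*C_k$ sits inside. If $j>2$ or $k>2$, one of these cyclic free factors has order strictly greater than $2$, so lemma \ref{freepfrees} produces a non-abelian free subgroup of $C_j*C_k$, hence of $\overline{P}_{j,k,l}$, which is therefore not solvable. This forces $j=k=2$. To rule out $l>2$ I would pass to a quotient: adjoin the relation $z^l=e$ to $P_{2,2,l}$. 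Since $xy=z^l$, this forces $xy=e$, and as $x^2=y^2=e$ it identifies $y$ with $x$; the resulting quotient has presentation $\langle x,z\mid x^2,\,z^l\rangle\cong C_2*C_l$. If $l>2$, lemma \ref{freepfrees} again yields a non-abelian free subgroup, so $C_2*C_l$ is not solvable, and since it is a quotient of $\overline{P}_{2,2,l}$ and solvability passes to quotients, $\overline{P}_{2,2,l}$ is not solvable. Hence $l=2$.

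For the converse it suffices to show $\overline{P}_{2,2,2}$ is polycyclic. The key step is to exhibit a normal cyclic subgroup with polycyclic quotient, and the natural candidate is $\langle z^2\rangle$. To see it is normal, use $z^2=xy$ together with $x^{-1}=x$ and $y^{-1}=y$: a one-line computation gives $x(z^2)x^{-1}=yx=(xy)^{-1}=(z^2)^{-1}$ and similarly $y(z^2)y^{-1}=(z^2)^{-1}$, while $z$ obviously commutes with $z^2$; since each generator conjugates $z^2$ back into $\langle z^2\rangle$, the subgroup is normal. Quotienting by $\langle z^2\rangle$ kills $z^2$, hence $xy=z^2=e$, hence $y=x$, leaving $\langle x,z\mid x^2,\,z^2\rangle\cong C_2*C_2$, which is polycyclic by lemma \ref{c2sol}. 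Thus $\overline{P}_{2,2,2}$ is an extension of a polycyclic group by the cyclic group $\langle z^2\rangle$, hence polycyclic, and in particular solvable---completing both the ``if'' direction and the ``moreover''.

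The points needing care are the normality of $\langle z^2\rangle$ and the exact identification of the quotients after eliminating a generator---one must check that adjoining $z^l=e$ really collapses the presentation to $C_2*C_l$ and to nothing smaller, and that the conjugation identities are precisely the relations $x^2=y^2=e$ rewritten. I do not anticipate a deeper obstacle: the borderline role of the value $2$ is exactly the hypothesis ``order strictly greater than $2$'' in lemma \ref{freepfrees}, which is what forces the case analysis, and everything else is routine combinatorial group theory together with the two cited lemmas.
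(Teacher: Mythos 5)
Your proposal is correct and follows essentially the same route as the paper: lemma \ref{freepfrees} applied to the embedded $C_j*C_k$ and to the quotient $C_2*C_l$ for the ``only if'' direction, and normality of $\langle z^2\rangle$ with quotient $C_2*C_2$ (via lemma \ref{c2sol}) for polycyclicity. The only difference is that you spell out the conjugation identities $x(z^2)x^{-1}=(z^2)^{-1}=y(z^2)y^{-1}$ and the Tietze reductions explicitly, which the paper leaves implicit; these computations are correct.
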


\begin{cor}\label{sol eg cor}
There exists a polycyclic group of torsion length $2$.
\end{cor}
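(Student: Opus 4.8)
The plan is to combine the two immediately preceding results, which is essentially all the corollary requires. By Proposition \ref{solvable eg}, the group $\overline{P}_{2,2,2}$ is polycyclic; and by Proposition \ref{first eg} (the case $j=k=l=2$ of the family $P_{j,k,l}$), the group $\overline{P}_{2,2,2}$ satisfies $\overline{P}_{2,2,2}/\llangle \tor(\overline{P}_{2,2,2})\rrangle \cong C_{2}$, so $\torlen(\overline{P}_{2,2,2})=2$. Taking $G:=\overline{P}_{2,2,2}$ therefore exhibits a polycyclic group of torsion length $2$, as required.

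Concretely, I would write: by Proposition \ref{solvable eg}, $\overline{P}_{2,2,2}$ is polycyclic, and by Proposition \ref{first eg} it has torsion length $2$; this is the desired example. One small point worth a sentence of care is that Proposition \ref{first eg} as stated requires $j,k,l>1$, which is satisfied with equality at $2$, so there is no gap there; and the conclusion $\torlen=2$ genuinely holds since $C_2$ is a non-trivial torsion group of torsion length $1$ and $\overline{P}_{2,2,2}$ is not itself torsion (it contains the infinite cyclic subgroup $\langle z^2\rangle$), so the length does not drop to $1$.

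There is no real obstacle here: both ingredients have already been established, and the corollary is a one-line deduction. If one wanted to make the statement self-contained one could also recall explicitly that $\overline{P}_{2,2,2}\cong (C_2*C_2)*_{\langle xy\rangle=\langle z^2\rangle}\mathbb{Z}$ and that the normal closure of its torsion is $\llangle C_2*C_2\rrangle$, with quotient $C_2$, but this merely repeats the proof of Proposition \ref{first eg}. The proof is:

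\begin{proof}
By Proposition \ref{solvable eg}, the group $\overline{P}_{2,2,2}$ is polycyclic. By Proposition \ref{first eg} (taking $j=k=l=2$), $\torlen(\overline{P}_{2,2,2})=2$. Hence $\overline{P}_{2,2,2}$ is a polycyclic group of torsion length $2$.
\end{proof}
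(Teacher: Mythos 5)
Your proof is correct and is exactly the intended argument: the paper states the corollary as an immediate consequence of Proposition \ref{solvable eg} together with Proposition \ref{first eg}, which is precisely the combination you make. The extra sentence verifying that the torsion length does not drop to $1$ is harmless but unnecessary, since Proposition \ref{first eg} already asserts $\torlen(\overline{P}_{j,k,l})=2$ exactly.
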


While we suspect there exist solvable groups of arbitrary finite torsion length, we have been unable to construct them. 

\begin{quesb}
Is there a finitely generated solvable group of infinite torsion length?
\end{quesb}

\ 

\noindent \scriptsize{\textsc{Mathematics Department, University of Neuch\^{a}tel
\\Rue Emile-Argand 11, Neuch\^{a}tel, 2000, Switzerland.
\\maurice.chiodo@unine.ch
\vspace{5pt}
\\Department of Mathematics, Ben-Gurion University of the Negev,
\\P.O.B. 653, Beer-Sheva, 84105, Israel. 
\\vyas@math.bgu.ac.il}


\begin{thebibliography}{99}

\bibitem{Chiodo2} M. Chiodo, \emph{The computational complexity of recognising embeddings, and a universal finitely presented torsion-free group}, arXiv:1107.1489v2 (2011).

\bibitem{Cirio et al} L. Cirio, A. D'Andrea, C. Pinzari, S. Rossi, \emph{Connected components of compact matrix quantum groups and finiteness conditions}, arXiv:1210.1421v3 (2012).

\bibitem{KapMia} I. Kapovich, A. Myasnikov,  \emph{Stallings foldings and subgroups of free groups}, J. Algebra. \textbf{248}, 608--668 (2002).

\bibitem{KharMia} O. Kharlampovich, A. Myasnikov, \emph{Hyperbolic groups and free constructions}, Trans. Amer. Math. Soc. (2) \textbf{350}, 571--613 (1998).

\bibitem{LearyNuc} I. Leary, B. Nucinkis, \emph{Every CW-complex is a classifying space for proper bundles}, Topology \textbf{40}, 539--550 (2009).

\bibitem{Rob} D. Robinson, \emph{A course in the theory of groups }, Springer-Verlag, New York, (1982).

\bibitem{Rot} J. Rotman, \emph{An introduction to the theory of groups}, Springer-Verlag, New York, (1995).

\end{thebibliography}
\end{document}